\newtheorem{theorem}{Theorem}[section]
\newtheorem{corollary}[theorem]{Corollary}
\newtheorem{definition}[theorem]{Definition}
\newtheorem{lemma}[theorem]{Lemma}
\newtheorem{proposition}[theorem]{Proposition}
\theoremstyle{definition}
\newtheorem{claim}[theorem]{Claim}
\newtheorem{remark}[theorem]{Remark}
\DeclareMathOperator{\Aut}{Aut}
\DeclareMathOperator{\Disc}{Disc}
\DeclareMathOperator{\End}{End}
\DeclareMathOperator{\GL}{GL}
\DeclareMathOperator{\Hom}{Hom}
\DeclareMathOperator{\Id}{Id}
\DeclareMathOperator{\image}{Image}
\DeclareMathOperator{\Res}{Res}
\DeclareMathOperator{\SL}{SL}
\DeclareMathOperator{\Spec}{Spec}
\newcommand{\BF}{{\mathbb{F}}}
\newcommand{\PP}{{\mathbb{P}}}
\newcommand{\BQ}{{\mathbb{Q}}}
\newcommand{\BZ}{{\mathbb{Z}}}
\newcommand{\alphatwo}{\boldsymbol{\alpha}_2}
\newcommand{\psibar}{\overline{\psi}}
\newcommand{\ord}{{\mathcal{O}}}
\newcommand{\calW}{{\mathcal{W}}}
\newcommand{\frakp}{{\mathfrak{p}}}
\newcommand{\Arr}{A_\textup{rr}}
\newcommand{\Arl}{A_\textup{rl}}
\newcommand{\Alr}{A_\textup{lr}}
\newcommand{\All}{A_\textup{ll}}
\newcommand{\blankrel}{\mathrel{\phantom{=}}}
\newcommand{\pz}{\phantom{0}}
\newcommand{\eps}{\varepsilon}
\renewcommand{\bar}\mybar
\renewcommand{\hat}\widehat
\renewcommand{\tilde}\widetilde
\newcommand{\mybar}[1]{
  \mathchoice
  {#1\llap{$\overline{\phantom{\displaystyle\rm#1}}$}}
  {#1\llap{$\overline{\phantom{\textstyle\rm#1}}$}}
  {#1\llap{$\overline{\phantom{\scriptstyle\rm#1}}$}}
  {#1\llap{$\overline{\phantom{\scriptscriptstyle\rm#1}}$}}
}
\newcommand{\col}{\,{:}\,}
\def\@marginparreset{\marginparstyle}
\def\marginparstyle{\SMALL\normalfont\raggedright\openup-2pt }
\long\def \@savemarbox #1#2{%
 \global\setbox #1%
     \vtop{%
       \hsize\marginparwidth
       \@parboxrestore
       \reset@font
       \@setnobreak
       \@setminipage
       \@marginparreset
       #2%
       \par
       \global\@minipagefalse
       }%
}
\DeclareRobustCommand\marginparhere[2][0pt]{%
 \ifhmode\unskip\fi
 \ifmmode\ssty\mathclose{\fi
   \rlap{\hskip\marginparsep\smash{%
     \vtop to 0pt{\marginparstyle \hsize\marginparwidth 
       \leftskip=#1 \rightskip=-#1 plus20pt \noindent#2\vss}}}%
 \ifmmode}\fi
}
\DeclareRobustCommand{\redden}{\@ifnextchar*
 {\@latex@error{{redden*} is only an environment}}
 {\@ifnextchar[{\r@dden}{\r@dd@n}}}
\def\r@dden[#1]{\ifmmode\@mperr{math}\else\ifinner\@mperr{inner}%
 \else\leavevmode\marginpar{\leavevmode#1\endgraf}\fi\fi\r@dd@n}
\def\r@dd@n{\def\reserved@a{redden}
 \ifx\@currenvir\reserved@a\redd@n\bgroup\ignorespaces
 \else\expandafter\redd@n\fi}
\def\endredden{\unskip\egroup}
\def\@mperr#1{\@latex@warning{redden in #1 mode: no marginpar possible}}
\def\redd@n#1{\leavevmode{\color{red}#1}}
\begin{document}

\title{Purely inseparable Richelot isogenies}

\author{Bradley W.~Brock}
\address{Center for Communications Research,
         805 Bunn Drive, 
         Princeton, NJ 08540-1966 USA}
\email{\href{mailto:"Bradley W. Brock" <brock@idaccr.org>}{brock@idaccr.org}}

\author{Everett W.~Howe}
\address{Unaffiliated mathematician, San Diego, CA, USA}
\urladdr{\url{http://ewhowe.com}}
\email{\href{mailto:"Everett W. Howe" <however@alumni.caltech.edu>}{however@alumni.caltech.edu}}

\date{25 March 2025}
\keywords{Richelot isogeny, characteristic 2, supersingular, inseparable}

\subjclass{Primary 14K02; Secondary 14H40, 14H45}


\begin{abstract}
We show that if $C$ is a supersingular genus-$2$ curve over an algebraically 
closed field of characteristic~$2$, then there are infinitely many Richelot
isogenies starting from (the Jacobian of)~$C$.  This is in contrast to 
what happens with nonsupersingular curves in characteristic~$2$, or to arbitrary
curves in characteristic not~$2$: In these situations, there are at most fifteen
Richelot isogenies starting from a given genus-$2$ curve.

More specifically, we show that if $C_1$ and $C_2$ are two arbitrary 
supersingular genus-$2$ curves over an algebraically closed field of 
characteristic~$2$, then there are exactly sixty Richelot isogenies from $C_1$ 
to~$C_2$, unless either $C_1$ or $C_2$ is isomorphic to the curve 
$y^2 + y = x^5$.  In that case, there are either twelve or four Richelot 
isogenies from $C_1$ to~$C_2$, depending on whether $C_1$ is isomorphic 
to~$C_2$. (Here we count Richelot isogenies up to isomorphism.) We give explicit
constructions that produce all of the Richelot isogenies between two 
supersingular curves.
\end{abstract}

\maketitle

\section{Introduction}
Let $k$ be an algebraically closed field of characteristic~$2$. In this paper we
prove that there are Richelot isogenies connecting every two supersingular 
genus-$2$ curves over~$k$. More specifically:

\begin{theorem}
\label{T:main}
Let $C_1$ and $C_2$ be two supersingular genus-$2$ curves over~$k$ with
polarized Jacobians $(J_1,\lambda_1)$ and $(J_2,\lambda_2)$. If neither $C_1$
nor $C_2$ is isomorphic to the curve $y^2 + y = x^5$, then there are exactly
sixty Richelot isogenies from $(J_1,\lambda_1)$ to $(J_2,\lambda_2)$, up to 
isomorphism.  If exactly one of $C_1$ or $C_2$ is isomorphic to the special
curve, then there are twelve Richelot isogenies from $(J_1,\lambda_1)$ to
$(J_2,\lambda_2)$, up to isomorphism.  If both $C_1$ and $C_2$ are isomorphic
to the special curve, then there are four Richelot isogenies from 
$(J_1,\lambda_1)$ to $(J_2,\lambda_2)$, up to isomorphism.
\end{theorem}

In Section~\ref{S:explicit} we provide explicit constructions that give all of
the Richelot isogenies connecting the Jacobians of two supersingular curves.

To put our result in context, let us give some background information about
Richelot isogenies. Let $k$ be an algebraically closed field of arbitrary 
characteristic and let $J_1$ and $J_2$ be abelian surfaces over~$k$ with
principal polarizations $\lambda_1$ and~$\lambda_2$. (We view a polarization of
an abelian variety $A$ as an isogeny from $A$ to its dual variety, rather than
as a line bundle.) A \emph{Richelot isogeny} from $(J_1,\lambda_1)$ to
$(J_2,\lambda_2)$ is an isogeny $\varphi\colon J_1 \to J_2$ that fits into a
diagram 
\begin{equation}
\label{EQ:Diagram1}
\begin{gathered}
\xymatrix{
J_1\ar[rr]^{2\lambda_1}\ar[d]_{\varphi} && \hat{J_1}                       \\
J_2\ar[rr]^{\lambda_2}                  && \hat{J_2}\ar[u]_{\hat{\varphi}}
}
\end{gathered}
\end{equation}
where $\hat{\varphi}$ is the dual isogeny of~$\varphi$. If $C_1$ and $C_2$ are
genus-$2$ curves over~$k$, a \emph{Richelot isogeny} from $C_1$ to $C_2$ is
defined to be a Richelot isogeny from the Jacobian of $C_1$ to the Jacobian 
of~$C_2$.

Up to automorphisms of $(J_2,\lambda_2)$, a Richelot isogeny is determined by
its kernel in~$J_1$.  In characteristic not~$2$, this kernel is an order-$4$ 
subgroup of the $2$-torsion of $J_1(k)$ that is isotropic with respect to the
Weil pairing on~$J_1[2]$. Conversely, every maximal isotropic subgroup of 
$J_1[2](k)$ gives rise to an isogeny from $(J_1,\lambda_1)$ to some 
principally-polarized variety $(J_2,\lambda_2)$ 
(see~\cite[Proposition~16.8, p.~135]{Milne1986}).

We say that two Richelot isogenies $\varphi$ and $\psi$ from  $(J_1,\lambda_1)$
to $(J_2,\lambda_2)$ are \emph{isomorphic} to one another if there are
automorphisms $\beta_1$ of $(J_1,\lambda_1)$ and $\beta_2$ of $(J_2,\lambda_2)$
such that $\psi = \beta_2 \circ \varphi\circ \beta_1$. Thus we see that in
characteristic not $2$ there is a bijection between the set of isomorphism
classes of Richelot isogenies starting from $(J_1,\lambda_1)$ and the set of 
orbits of maximal isotropic subgroups of $J_1[2](k)$ under the action of the
automorphism group of $(J_1,\lambda_1)$.  Typically this automorphism group acts
trivially on the set of maximal isotropic subgroups, and in this case the number
of Richelot isogenies starting from $(J_1,\lambda_1)$ is just the number of
maximal isotropic subgroups of~$J_1[2](k)$.

An easy calculation shows that every $4$-dimensional $\BF_2$-vector space with a
nondegenerate alternating pairing has exactly $15$ maximal isotropic subgroups.
Thus we see that for a typical principally-polarized surface $(J_1,\lambda_1)$
in characteristic not~$2$, there are $15$ nonisomorphic Richelot isogenies
starting from~$(J_1,\lambda_1)$. 

In characteristic $2$ life is a little different, because the $2$-torsion of 
$J_1$ can no longer be understood simply in terms of the points of order $2$ 
in~$J_1(k)$.  Instead, we must consider $J_1[2]$ as a group scheme. Indeed, the
bijection we mentioned two paragraphs ago is really between the set of 
isomorphism classes of Richelot isogenies starting from $(J_1,\lambda_1)$ and
the set of orbits of maximal isotropic subgroup schemes of $J_1[2]$ under the 
action of the automorphism group $\Aut(J_1,\lambda_1)$.  Let us see what this
means in characteristic~$2$.

Suppose that $(J_1,\lambda_1)$ is a principally-polarized abelian surface over
an algebraically closed field $k$ of characteristic~$2$. The rank\footnote{
     The \emph{rank} of a finite group scheme $G$ over a field $k$ is the
     dimension of the $k$-algebra $R$ such that $G\cong \Spec R$ as a scheme;
     the rank of a finite group scheme generalizes the idea of the 
     \emph{order} of a finite group.} 
of the group scheme $J_1[2]$ is~$16$, and $J_1[2]$ (like all finite commutative
group schemes over perfect fields) can be decomposed into a product
\[
J_1[2] = \Arr \times \Arl \times \Alr \times \All,
\]
where $\Arr$ is a reduced group scheme with reduced dual, $\Arl$ is a reduced 
group scheme with local dual, $\Alr$ is a local group scheme with reduced dual,
and $\All$ is a local group scheme with local dual (\cite[p.~17]{Manin1963},
\cite[Corollary, p.~52]{Waterhouse1979}, \cite[p.~136]{Mumford1970}). Since
$J_1[2]$ is a $2$-torsion group scheme in characteristic~$2$, its
reduced-reduced factor is trivial. And since $J_1[2]$ is self-dual (via the Weil
pairing, obtained from the principal polarization~$\lambda_1$), its
reduced-local and local-reduced parts are dual to one another. Thus, there are
three possibilities: 
\begin{itemize}
\item $\All$ has rank $1$ and $\Arl$ and $\Alr$ have rank~$4$
      (the \emph{ordinary} case);
\item $\All$ has rank $4$ and $\Arl$ and $\Alr$ have rank~$2$
      (the \emph{almost ordinary} case);
\item $\All$ has rank $16$ and $\Arl$ and $\Alr$ have rank~$1$
      (the \emph{supersingular} case).
\end{itemize}

Consider the ordinary case, where $J_1[2]$ is isomorphic to the product of a 
rank-$4$ reduced group scheme $\Arl$ with a rank-$4$ local group scheme~$\Alr$.
The kernel of a Richelot isogeny starting at $J_1$ can be also be written as a
product $R\times L$ of a reduced group scheme $R$ with a local group scheme~$L$,
and there are three possible shapes for these groups:
\begin{itemize}
\item $R = 0$ and $L = \Alr$;
\item $R = \Arl$ and $L = 0$;
\item $R(k)$ is $\{0,T\}$ for one of the three points $T$ of order
      $2$ in~$J_1(k)$, and $L$ is the unique rank-$2$ subgroup scheme
      of $\Alr$ that pairs trivially with~$R$ under the Weil pairing.
\end{itemize}
The first possibility is the Frobenius isogeny starting at $(J_1,\lambda_1)$, 
and the second is the Verschiebung. The third possibility gives rise to three 
Richelot isogenies, corresponding to the three points of order~$2$ in~$J_1(k)$.
These isogenies can be understood by lifting to characteristic $0$ via the 
Serre--Tate canonical lift. (We note also that lifting to characteristic $0$
suggests a natural way of assigning multiplicities to the three types of
ordinary Richelot isogenies in characteristic $2$ so that the weighted number of
such isogenies is~$15$.)

For the almost-ordinary case, we note that a result of 
Manin~\cite[Theorem~4.1, pp.~72--73]{Manin1963} shows that the rank-$4$ group 
scheme $\All$ that appears in the product decomposition of $J_1[2]$ is 
isomorphic to the $2$-torsion subgroup $E[2]$ of the unique supersingular 
elliptic curve~$E$ over~$k$, and so there is a unique isotropic subgroup $S$ of
$\All$ of rank~$2$. Thus, the only maximal isotropic subgroups of 
${\Arl\times\Alr\times\All}$ are $\Arl\times S$ and $\Alr\times S$, so there are
only two Richelot isogenies starting from $(J_1,\lambda_1)$. The one with kernel
$\Alr\times S$ is the Frobenius, and the one with kernel $\Arl\times S$ is 
Verschiebung. 

Note that in every case we have discussed so far, a given polarized abelian
surface is the source for finitely many Richelot isogenies because there are
only finitely many maximal isotropic subgroup schemes of the $2$-torsion:
fifteen in characteristic not~$2$, five for ordinary surfaces in
characteristic~$2$, and two for almost-ordinary surfaces in characteristic~$2$.
That is what makes Theorem~\ref{T:main} somewhat surprising: Since there are
Richelot isogenies between every pair of supersingular curves, there are
\emph{infinitely many} Richelot isogenies starting from a given supersingular
curve.

Our definition of a Richelot isogeny is of course a modern one. Richelot's 
original papers~\cite{Richelot1836, Richelot1837}, published in 1836 
and~1837, were concerned with the evaluation of ``ultra-elliptic integrals''.
In~1865, K\"onigsberger~\cite{Konigsberger1865} interpreted Richelot's work in
terms of duplication formul\ae\ for two-variable theta functions. In 
characteristic~$0$, this formulation is essentially the same as our formulation
in terms of isogenies of abelian varieties. See the article by Bost and
Mestre~\cite{BostMestre1988} or the book by Cassels and
Flynn~\cite[Ch.~9]{CasselsFlynn1996} for more information.

This paper is structured as follows.  In Section~\ref{S:review} we review a few
facts about supersingular genus-$2$ curves in characteristic~$2$.  In 
Section~\ref{S:theorem} we restate our main theorem in slightly different terms,
together with three lemmas used in its proof. In Section~\ref{S:proof} we prove
the theorem, and in Section~\ref{S:explicit} we give explicit constructions 
that, in every case, provide all of the Richelot isogenies between two curves.

Some of our arguments depend on computations that can be accomplished with a 
computer algebra system. We have included a Magma file with the auxiliary 
material for the arXiv version of this paper; this file includes code to verify
some of our statements. We indicate in the text which statements have such
verifications.

\section{Supersingular genus-\texorpdfstring{$2$}{2} curves in characteristic~\texorpdfstring{$2$}{2}}
\label{S:review}
In this section we present some background material and easily-proven results
about supersingular genus-$2$ curves in characteristic~$2$.

Recall that the \emph{Igusa invariants} 
$[J_2\col J_4\col J_6\col J_8\col J_{10}]$ of a genus-$2$ curve over
a field~$k$ form an element of a weighted projective space over~$k$, where each
$J_{i}$ has weight~$i$, and where we have $J_2 J_6 = J_4^2 + 4J_8$ and
$J_{10}\neq 0$.

\begin{lemma}
\label{L:models}
Let $C$ be an arbitrary genus-$2$ curve over an algebraically closed field $k$
of characteristic~$2$, with Igusa invariants 
$[J_2\col J_4\col J_6\col J_8\col J_{10}]$. 
\begin{itemize}
\item[\textup{1.}] $C$ is supersingular if and only if 
      $J_2 = J_4 = J_6 = 0$.
\item[\textup{2.}] If $C$ is supersingular and $J_8 \neq 0$, then
     \begin{itemize}
     \item[\textup{(a)}] $C$ is isomorphic to the curve 
          $y^2 + y = Ax^5 + Ax^3$, where $A^{16} = J_8^5/J_{10}^4$.
     \item[\textup{(b)}] $\Aut C$ is a group of order~$32$. 
          In particular, it is the nonsplit extension of $(\BZ/2\BZ)^4$ by
          $\BZ/2\BZ$ that is identified as \textup{SmallGroup(32,50)}
          in the Magma/GAP database of small groups, and by name as
          \textup{\texttt{C4.C2{\textasciicircum}3}}.
     \item[\textup{(c)}] More specifically\textup{:} For every root $t$ of
          \[
           \qquad\qquad T^{16} + T^8 + A^{-2} T^2 + A^{-3} T 
           =  T ( T^5 + T + A^{-1} ) ( T^{10} + T^6 + A^{-1} T^5 + A^{-2}),
          \]
          there are two polynomials $f$ of degree at most $2$ such that
          $(x,y)\mapsto(x+t,y+f(x))$ is an automorphism of~$C$. Every
          automorphism of $C$ arises in this way. The automorphisms with $t=0$
          are the identity and the hyperelliptic involution~$\iota$\textup{;}
          the automorphisms with $t$ a root of $T^5 + T + A^{-1}$ have
          order~$2$\textup{;} the automorphisms with $t$ a root of 
          $T^{10} + T^6 + A^{-1} T^5 + A^{-2}$ have order~$4$, and their squares
          are the hyperelliptic involution. Two automorphism of order~$4$ are
          conjugate if and only if they come from the same value of~$t$.
     \end{itemize}
\item[\textup{3.}] If $C$ is supersingular and $J_8 = 0$, then
     \begin{itemize}
     \item[\textup{(a)}] $C$ is isomorphic to the curve $y^2 + y = x^5$.
     \item[\textup{(b)}] $\Aut C$ is a group of order $160$. In particular, it
          is the semidirect product of $\BZ/5\BZ$ acting nontrivially on the
          group from statement \textup{2(b)}. 
          This group is identified as \textup{SmallGroup(160,199)} in the
          Magma/GAP database of small groups, and by name as
          \textup{\texttt{(C4.C2{\textasciicircum}3):C5}}.
     \item[\textup{(c)}] More specifically\textup{:} for every fifth root of 
          unity $\zeta$ and every root $t$ of 
          \[ T^{16} + T = T(T^5+1)(T^{10}+T^5+1), \]
          there are two polynomials $f$ of degree at most $2$ such that 
          $(x,y)\mapsto(\zeta x+t,y+f(x))$
          is an automorphism of~$C$. Every automorphism of $C$ arises in this
          way. The automorphisms with $\zeta\neq 1$ have order divisible 
          by~$5$\textup{;} the automorphisms with $\zeta=1$ and $t=0$ are the
          identity and the hyperelliptic involution~$\iota$\textup{;} the 
          automorphisms with $\zeta=1$ and $t^5 = 1$ have order~$2$\textup{;} 
          the automorphisms with $\zeta=1$ and with $t$ a root of 
          $T^{10} + T^5 + 1$ have order~$4$, and their squares are the
          hyperelliptic involution. Two automorphism of order~$4$ are conjugate
          if and only if they come from values of $t$ having the same fifth power.
     \end{itemize}
\end{itemize}
\end{lemma}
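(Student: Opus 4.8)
The plan is to handle the three parts in turn, reducing everything to explicit Artin--Schreier models together with one appeal to Igusa's formulas for the invariants $[J_2\col J_4\col J_6\col J_8\col J_{10}]$, which are designed to remain valid in characteristic~$2$ even though one cannot complete the square. For part~1, recall that a genus-$2$ curve $C$ has supersingular Jacobian exactly when it has $p$-rank~$0$, and that $C$ has a model $y^2 + h(x)\,y = f(x)$ with $\deg h\le 3$, $\deg f\le 6$, whose $p$-rank is governed by the zeros of~$h$: a supersingular curve has a model $y^2 + y = f(x)$ with $\deg f = 5$ (the cover $C\to\PP^1$ given by~$x$ being wildly ramified only over~$\infty$), an almost-ordinary one a model $y^2 + x\,y = f(x)$, and an ordinary one a model with $h$ having three distinct zeros on~$\PP^1$. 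Evaluating Igusa's formulas on these three families gives the equivalence: $J_2 = 0$ cuts out the non-ordinary locus (where then $J_4 = 0$ automatically, since $J_4^2 = J_2 J_6$), $J_6\ne 0$ on the almost-ordinary family, and $J_2 = J_4 = J_6 = 0$ on $y^2 + y = f(x)$.

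For parts 2(a) and 3(a), start from a supersingular model $y^2 + y = \sum_{i=0}^{5} c_i x^i$ with $c_5\ne 0$. Replacing $y$ by $y + g(x)$ changes the right-hand side by $g^2 + g$, so taking $g = \alpha x^2 + \beta x + \gamma$ and using that $\gamma\mapsto\gamma^2+\gamma$ is onto~$k$, one kills the coefficients of $x^4$, $x^2$, $x$, and~$1$, reaching $y^2 + y = c_5 x^5 + c_3 x^3$. The remaining coordinate change preserving this shape is $x\mapsto ax$, sending $(c_5,c_3)$ to $(a^5 c_5, a^3 c_3)$; if $c_3\ne 0$ one solves $a^2 = c_3/c_5$ to reach $y^2 + y = Ax^5 + Ax^3$ with $A\in k^\times$, and if $c_3 = 0$ one scales to reach $y^2+y=x^5$. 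The two families are disjoint, since $c_3\ne 0$ is preserved by $x\mapsto ax$, and within the first family $A$ is itself an isomorphism invariant because $a^5 c_5 = a^3 c_3$ forces $a^2 = 1$, hence $a = 1$. Substituting $y^2 + y = Ax^5 + Ax^3$ into Igusa's formulas then yields $A^{16} = J_8^5/J_{10}^4$ and shows $J_8\ne 0$, whereas $y^2+y=x^5$ has $J_8 = 0$; with part~1 this identifies the two strata.

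For parts 2(b),(c) and 3(b),(c), note that the hyperelliptic involution $\iota\col(x,y)\mapsto(x,y+1)$ is central in $\Aut C$ and that $\Aut C/\langle\iota\rangle$ embeds in $\mathrm{PGL}_2(k)$ via $C\to C/\langle\iota\rangle\cong\PP^1$; as $x\col C\to\PP^1$ is ramified only over~$\infty$, the image lies in the affine group $\{x\mapsto ax+b\}$. Hence every automorphism has the form $(x,y)\mapsto(ax+b,\,y+g(x))$, and preservation of the equation is the Artin--Schreier condition $f(ax+b) + f(x) = g^2 + g$ with $\deg g\le 2$. Comparing the coefficient of $x^5$ (and, when $J_8\ne 0$, of~$x^3$) forces $a^5 = a^3 = 1$, hence $a=1$, in the first case and $a^5 = 1$ in the second; the remaining comparisons determine $\alpha,\beta$ in terms of $a$ and~$b$ and reduce to a single degree-$16$ equation on~$b$, namely $T^{16}+T^8+A^{-2}T^2+A^{-3}T = 0$ in the first case --- whose factorization $T(T^5+T+A^{-1})(T^{10}+T^6+A^{-1}T^5+A^{-2})$ is a direct check --- and $T^{16}+T = 0$ in the second. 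For each admissible~$b$ there are exactly two admissible~$g$, differing by~$1$ (i.e.\ $\phi$ and $\iota\phi$), giving $2\cdot 16 = 32$ automorphisms in the first case and $2\cdot 5\cdot 16 = 160$ in the second. The orders follow by checking that $\phi^2\in\{\mathrm{id},\iota\}$, equal to $\mathrm{id}$ exactly when $b$ lies on the factor $T^5+T+A^{-1}$ (respectively $T^5 = 1$) and to $\iota$ when $b$ lies on the degree-$10$ factor; the conjugacy statement follows because the translation parameter~$t$ (respectively its fifth power, since the order-$5$ automorphisms scale the parameter) is a conjugacy invariant, and one verifies that for order-$4$ $\phi$ the elements $\phi$ and $\iota\phi = \phi^{-1}$ are conjugate.

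The real content to be imported is Igusa's explicit description of the characteristic-$2$ invariants, used both for the criterion in part~1 --- notably the non-vanishing of $J_6$ on the almost-ordinary stratum --- and for the identity $A^{16} = J_8^5/J_{10}^4$. The automorphism computations, though elementary, are the error-prone step, because of the wild ramification and the Artin--Schreier bookkeeping; I would carry them out exactly as above and sanity-check $|\Aut C|$ against the product of the number of admissible~$a$, the number of admissible~$b$, and~$2$.
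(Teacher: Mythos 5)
Your reduction to the normal form in parts 2(a) and 3(a) contains a genuine error. The substitution $y\mapsto y+g(x)$ with $g=\alpha x^2+\beta x+\gamma$ changes the right-hand side by
\[
g^2+g=\alpha^2x^4+(\alpha+\beta^2)x^2+\beta x+(\gamma^2+\gamma),
\]
so the available changes to the coefficients of $x^4$, $x^2$, $x$, $1$ sweep out only the $3$-dimensional locus where the $x^4$-change equals the square of (the $x^2$-change plus the square of the $x$-change); you cannot kill all four coefficients of a general quintic this way. Concretely, $y^2+y=x^5+x$ cannot be brought to the form $y^2+y=c_5x^5+c_3x^3$ by any $y$-substitution: killing the linear term forces $\beta=1$, killing the resulting $x^2$-term forces $\alpha=1$, and then a nonzero $x^4$-term appears. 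The missing ingredient is a translation $x\mapsto x+b$: after first normalizing to $y^2+y=c_5x^5+c_3x^3+c_1x$, translating and re-killing the $x^4$- and $x^2$-terms turns the linear coefficient into $c_5b^4+c_3b^2+c_1+\bigl(c_3b+(c_5b)^{1/2}\bigr)^{1/2}$, and one must solve the resulting separable degree-$16$ equation in $b$ --- the same additive-polynomial phenomenon that governs part (c). Relatedly, your assertion that ``the remaining coordinate change preserving this shape is $x\mapsto ax$'' contradicts part (c) itself, which produces $16$ translations preserving the shape; your conclusion that $A$ is an isomorphism invariant does survive, because the $x^5$- and $x^3$-coefficients are untouched by any $y$-substitution, so a general map $x\mapsto ax+b$ still forces $a^2=1$. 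Note that the paper sidesteps the explicit reduction entirely: it computes that $y^2+y=Ax^5+Ax^3$ has $J_8^5/J_{10}^4=A^{16}$ and invokes the completeness of the Igusa invariants to conclude $C\cong C_A$.

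Two further points. In part 1 your converse direction ($J_2=J_4=J_6=0$ implies supersingular) is routed through the claims that $J_2$ is nowhere zero on the ordinary stratum and $J_6$ is nowhere zero on the almost-ordinary stratum; these are true, but they are precisely the computations carrying the content and you do not perform them (you also import the Deuring--Shafarevich count and the fact that $p$-rank $0$ implies supersingular for abelian surfaces). The paper instead argues directly: every curve $y^2+y=x^5+bx^3$ admits explicit anti-commuting involutions realizing it as a $V_4$-cover of $\PP^1$ with two supersingular elliptic quotients, and these curves realize every value of $J_8^5/J_{10}^4$. Finally, in parts 2(c) and 3(c) you assert that an order-$4$ automorphism $\phi$ is conjugate to $\iota\phi=\phi^3$ with only ``one verifies''; this is the one nontrivial direction of the conjugacy statement, and the paper's proof is a genuine counting argument (there are $20$ elements of order $4$ but only $12$ of order at most $2$, so some order-$4$ element fails to commute with $\phi$, and any such element must conjugate $\phi$ to $\phi^3$ because conjugation preserves the translation parameter $t$). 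As written, that step is left open.
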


\begin{proof}
First we show that every supersingular genus-$2$ curve over $k$ can be written
in the form $y^2 + y = f$ for a quintic polynomial~$f$.

Every genus-$2$ curve over $k$ has a model, nonsingular in the affine plane, of
the form ${y^2 + hy = f}$ for polynomials $h,f\in k[x]$ with $f$ of degree $5$
and $h$ nonzero of degree at most~$2$. (The nonsingularity is equivalent to $h$
and $(h')^2f + (f')^2$ being coprime.)  The desingularization of this model has
a single point at infinity, which we denote~$\infty$. If $h$ is nonconstant, say
with a root~$a$, then the degree-$0$ divisor $D = (a,\sqrt{f(a)}) - \infty$
represents a $2$-torsion point on the Jacobian, because $2D$ is the divisor of 
the function~$x-a$. Therefore, for a supersingular curve~$C$, the polynomial $h$
must be a nonzero constant. By scaling $y$ appropriately, we may assume 
that~${h = 1}$. 

By explicit computation\footnote{
     See Verification 2.1a in the Magma file.
} (which we leave to the reader and their computer algebra system) we see that
the Igusa invariants of a curve of the form ${y^2 + y = \text{(quintic)}}$ 
satisfy $J_2 = J_4 = J_6 = 0$. These conditions define an irreducible 
$1$-dimensional subvariety of the moduli space of curves, and we have just shown
that it contains the subvariety of supersingular curves. But the moduli space of
supersingular principally-polarized abelian surfaces is one-dimensional and 
closed in the moduli space of all principally-polarized abelian 
surfaces~\cite[Theorem~7(i), p.~163]{Koblitz1975}, so every curve with 
$J_2 = J_4 = J_6 = 0$ must be supersingular.

This completes the proof of the first statement. However, the appeal to a deep
theorem about the dimension of components of moduli spaces of abelian varieties
with a given $p$-rank might be unsatisfying, especially since there are 
elementary arguments that will serve instead. So let us show more explicitly
that every genus-$2$ curve with $J_2 = J_4 = J_6 = 0$ must be supersingular.
The argument also foreshadows reasoning about genus-$4$ curves that appears
in Section~\ref{SS:parameters}.

Let $P = (r,s)$ be any affine point on the hyperelliptic curve $y^2 + y = x^5$
with $r\neq0$ and let $P' = (r,s+1)$ be the image of $P$ under the hyperelliptic
involution. Set $b = (r^5 + 1)/r$ and consider the curve
\[
C_b\colon y^2 + y = x^5 + bx^3.
\]
We check\footnote{
     See Verification 2.1b in the Magma file for many statements in the
     following paragraph.
} that there is an involution $\alpha\colon C_b\to C_b$ given by 
\[
(x,y) \mapsto (x + r^2, y + r x^2 + r^3 x + s),
\]
that the functions $u = x^2 + r^2 x$ and $v = y + (x/r^2)(r x^2 + r^3 x + s)$
are stable under this involution, and that these functions satisfy
$v^2 + v = u^3/r^2 + (s/r^4)u,$ so that the quotient of $C_b$ by the involution
$\alpha$ is the supersingular elliptic curve $E_P$ given by 
$y^2 + y = x^3/r^2 + (s/r^4)x$. Repeating this computation with $P'$ (that is,
replacing $s$ with $s+1$), we construct another involution $\alpha'$ of~$C_b$,
and we find the quotient of $C_b$ by this involution is the supersingular 
elliptic curve $E_{P'}$ given by $y^2 + y = x^3/r^2 + ((s+1)/r^4)x$. We compute
that $\alpha$ and $\alpha'$ commute with one another, and we note that their
product is the hyperelliptic involution $\iota$ of~$C_b$. It follows that we
have a diagram
\[
\xymatrix{
&& C_b\ar[rrd]^{\langle \alpha'\rangle}\ar[lld]_{\langle\alpha\rangle}\ar[d]_{\langle\iota\rangle}&&\\
E_P\ar[rrd] && \PP^1\ar[d] && E_{P'}\ar[lld]\\
&& \PP^1 && 
}
\]
that exhibits $C_b$ as a Galois $V_4$ extension of~$\PP^1$. This diagram shows
that the Jacobian of $C_b$ is isogenous to the product $E_P \times E_{P'}$, so
$C_b$ is supersingular. The vector of Igusa invariants of $C_b$ is 
$[0\col 0\col 0\col b^8 \col 1]$. Thus, every genus-$2$ curve over $k$ with 
$J_2 = J_4 = J_6 = 0$ is supersingular.

Next we note that two vectors of Igusa invariants 
$[0 \col 0 \col 0 \col J_8\col J_{10}]$ and 
$[0 \col 0 \col 0 \col J'_8\col J'_{10}]$ (with $J_{10}$ and $J'_{10}$ nonzero)
are equal if and only if $J_8^5/J_{10}^4 = (J'_8)^5 / (J'_{10})^4$, so 
supersingular curves over $k$ are classified up to isomorphism by the invariant
$I\colonequals J_8^5/J_{10}^4$.

Let us turn to statement~(2). Given any nonzero~$A$, we compute\footnote{
     See Verification 2.1c in the Magma file.
} that the Igusa invariants of the curve $C_A\colon y^2 + y = Ax^5 + Ax^3$ are
$[0 \col 0 \col 0 \col A^8 \col A^6]$, so the invariant $I(C_a)$ is equal
to~$A^{16}$, which is equal to the invariant of~$I(C)$. Thus, $C$ is isomorphic
to~$C_A$.

Suppose $\alpha$ is an automorphism of the curve~$C_A$. The hyperelliptic
involution $\iota$ of $C_A$ is central in the automorphism group, so $\alpha$
commutes with~$\iota$, and it is easy to check that therefore $\alpha$ must be
of the form $(x,y)\mapsto (g(x), y + f(x))$ for rational functions $f(x)$ and
$g(x)$ in~$k(x)$. The rational function $g(x)$ must define an automorphism of
$\PP^1$ that fixes the unique ramification point of the double cover 
$C_A \to \PP^1$ (which is $\infty$) so $g(x) = ax + t$ for some $a,t\in k$ with
$a\ne 0$. From the condition that 
\[
(y + f(x))^2 + (y + f(x)) = A(ax+t)^5 + A(ax+t)^3
\]
we find that\footnote{
     See Verification 2.1d in the Magma file.
}  
\[
f(x)^2 + f(x) = A(a^5-1) x^5 + Aa^4t x^4 
 + A(a^3-1)x^3 + Aa^2tx^2 + Aa(t^4 + t^2)x + A(t^5 + t^3).
\]
This shows that $a^5 = a^3 = 1$ so that $a = 1$, and also that the rational
function $f(x)$ is a polynomial of degree at most~$2$. Writing 
$f(x) = f_2 x^2 + f_1 x + f_0$, we find that\footnote{
     See Verification 2.1e in the Magma file.
} 
\[
f_2^2 x^4 + (f_1^2 + f_2) x^2 + f_1 x + (f_0^2 + f_0)
=
A t x^4 + A t x^2 + A (t^4 + t^2) x + A (t^5 + t^3),
\]
so that
\begin{align*}
f_0^2 + f_0 &= At^5 + At^3\\
        f_1 &= At^4 + At^2\\
        f_2 &= f_1^2 + A t = A^2 t^8 + A^2 t^4 + A t\\
      f_2^2 &= A t.
\end{align*}
The first of these equations always has two solutions. The remaining three can 
be solved if and only if $(A^2 t^8 + A^2 t^4 + A t)^2 = A t$, which can be 
rewritten as\footnote{
     See Verification 2.1f in the Magma file for this statement and the ones
     in the following paragraph.
} 
\[
A^4 t^{16} + A^4 t^8 + A^2 t^2 +  A t = 0,
\]
or
\[
0 
= t^{16} + t^8 + A^{-2} t^2 +  A^{-3} t
= t  (t^5 + t + A^{-1})  (t^{10} + t^6 + A^{-1}t^5 + A^{-2}).
\]
Thus, every automorphism gives a root $t$ of the polynomial in statement~2(c), 
and every such root $t$ gives two polynomials $f(x) = f_2 x^2 + f_1 x + f_0$ 
such that $(x,y)\mapsto (x+t, y + f(x))$ is an automorphism. Note that this 
already shows that $\#\Aut C = 32$.

When $t=0$ the automorphisms one obtains are the identity and the hyperelliptic
involution~$\iota$. Suppose 
$\alpha\colon (x,y)\mapsto (x+t, y + f_2 x^2 + f_1 x + f_0)$ is an automorphism
with $t\ne 0$. We compute that $\alpha^2$ is given by
$(x,y) \mapsto (x, y + f_2 t^2 +  f_1 t)$, and using the formul\ae\ for $f_1$
and $f_2$ we see that 
\[
f_2 t^2 +  f_1 t  = A^2 t^{10} + A^2 t^6 + A t^5 = A t^5 (At^5 + At + 1).
\]
If $t^5 + t + A^{-1} = 0$ then $f_2 t^2 +  f_1 t = 0$ and $\alpha^2$ is the 
identity. If $A^2 t^{10} + A^2 t^6 + A t^5 + 1 = 0$ then  $f_2 t^2 +  f_1 t = 1$
and $\alpha^2 = \iota$, so  $\alpha$ has order~$4$.

It is clear that conjugate automorphisms have the same value of~$t$. Let 
$\alpha$ be any an automorphism of order~$4$. Consider the automorphisms $\beta$
of order $4$ that do \emph{not} conjugate $\alpha$ to~$\alpha^3$; for such a
$\beta$ we must have $\alpha\beta = \beta\alpha$, and we see that 
\[(\alpha\beta)^2 = \alpha^2\beta^2 = \iota^2 = 1.\]
This shows that $\beta\mapsto\alpha\beta$ is an injective map from the set of 
$\beta$ of order $4$ that commute with $\alpha$ to the set of automorphisms of
order at most~$2$. Since there are $20$ automorphisms of order $4$ and only $12$
of order at most~$2$, we see that there must be a $\beta$ of order $4$ that does
\emph{not} commute with~$\alpha$, and that therefore conjugates $\alpha$ 
to~$\alpha^3$. Thus, there is a bijection between the conjugacy classes of
automorphisms of order $4$ and the roots of 
$T^{10} + T^6 + A^{-1} T^5 + A^{-2}$. This proves statement~2(c).

We have already seen that $\#\Aut C = 32$. To determine the structure of 
$\Aut C$ it suffices to look at a specific curve of this form over a finite
field, because we gain no automorphisms when specializing from the generic
case. The curve over $\BF_{2^2}$ where $A$ is a primitive cube root of unity has
all of its automorphisms rational over $\BF_{2^{10}}$, so this is a convenient
example to choose. We find that the center of $\Aut C$ is generated by the
hyperelliptic involution. We already know that every element of $\Aut C$ squares
to either the identity or the hyperelliptic involution, so the quotient of 
$\Aut C$ by its center is $(\BZ/2\BZ)^4$. Running through the database of groups
of order~$32$, we find\footnote{
     See Verification 2.1g in the Magma file.
} only two groups with this property, and only one of them has eleven elements
of order~$2$, namely \texttt{C4.C2\textasciicircum3}. This proves
statement~2(b).

The proof of statement~(3) is analogous, and we leave it to the 
reader.\footnote{
     See Verification 2.1h in the Magma file.
} The main difference is that instead of finding that $a^5 = a^3 = 1$, we only
have that $a^5 = 1$, so that $g(x)$ can be of the form $\zeta x + t$ for a fifth
root of unity~$\zeta$. Then one can check that the map from $\Aut C$ to the
fifth roots of unity that takes an automorphism to the coefficient of $x$ in 
$g(x)$ is a homomorphism, and that its kernel is isomorphic to the group from 
statement~2(b).
\end{proof}

The invariant we used in the proof of Lemma~\ref{L:models} is useful enough to
deserve a name.

\begin{definition}
If $C$ is a supersingular genus-$2$ curve in characteristic~$2$, with Igusa
invariants $[0 \col 0 \col 0 \col J_8\col J_{10}]$, we define the
\emph{supersingular invariant} of $C$ to be the quantity
$I(C) = J_8^5 / J_{10}^4$.
\end{definition}

The models of curves that we use in Lemma~\ref{L:models} have the benefit that
they show that if $k$ is a perfect field of characteristic~$2$, then for every
$a\in k$ there is a supersingular genus-$2$ curve over $k$ whose supersingular
invariant is equal to~$a$. A disadvantage of the models is that the curve with
supersingular invariant $0$ has a different form than all of the other curves.
There is a different family of curves that can be more convenient to use over an
algebraically closed field when we would like to treat all supersingular
curves at the same time, without making significantly different arguments for
the curve with invariant~$0$.

\begin{lemma}
Let $k$ be a field of characteristic~$2$. For every element $B\in k$, the curve
$y^2 + y = x^5 + Bx^3$ is supersingular and has supersingular invariant equal 
to~$B^{40}$. \qed
\end{lemma}

\begin{remark}
\label{R:alternateform}
Note that if $B\neq 0$ and if we let $A$ be an element of the algebraic closure
of $k$ with $A^2 = B^5$, then replacing $x$ with $B^{1/2}x$ in the equation for
the curve in the lemma shows that the curve is isomorphic to 
$y^2 + y = Ax^5 + Ax^3.$ Also, the automorphisms of the curve in the lemma
lie over the automorphisms $x\mapsto x + t$ of $\PP^1$, where $t$ is a root
of $T^{16} + B^4 T^8 + B^2 T^2 +  T$.
\end{remark}

We close with some useful statements about the finite simple subgroup schemes
of the square of the unique supersingular elliptic curve $E$ over~$k$. These
statements can be found in~\cite[\S2]{IbukiyamaKatsuraOort1986}.

Let $\alphatwo$ be the unique simple local-local group scheme over~$k$; the
group scheme $\alphatwo$ is the kernel of Frobenius on the additive group 
over~$k$.  Note that $\Hom(\alphatwo,\alphatwo)(k) = k$, and that there is a
unique copy of $\alphatwo$ in~$E$.  Fix an embedding $e\colon\alphatwo\to E$.

Given two elements $i,j\in k$, not both~$0$, we get an embedding
\[
\xymatrix{
          \alphatwo \ar[rr]^{(i,j)} & & 
          \alphatwo\times\alphatwo\ar[rr]^{e\times e} & & 
          E\times E
}
\]
of $\alphatwo$ in $E \times E$. The images we get from a pair $(i,j)$ and a pair
$(i',j')$ are the same if and only if $[i\col j] = [i'\col j']$ as elements
of~$\PP^1(k)$. Furthermore,
the image does not depend on the choice of~$e$.

The quotient of $E \times E$ by $[i\col j](\alphatwo)$ is isomorphic to
$E \times E$ if and only if $[i\col j]$ lies in $\PP^1(\BF_4)$.  Otherwise, the
quotient contains a unique copy of~$\alphatwo$. The quotient, divided by this
unique copy of $\alphatwo$, is~$E \times E$.

The Jacobian of a supersingular genus-$2$ curve over $k$ is isomorphic to 
$E \times E / [i\col j](\alphatwo)$ for some $[i\col j] \in \PP^1(k)$ that does
not lie in $\PP^1(\BF_4)$.

\section{The theorem and three lemmas}
\label{S:theorem}
Let $k$ be an algebraically closed field of characteristic~$2$. Using the
terminology from the preceding section, we can restate our main theorem as
follows:

\begin{theorem}
\label{T:count}
Let $C_1$ and $C_2$ be two supersingular genus-$2$ curves over~$k$, with
supersingular invariants $I_1$ and $I_2$, respectively. If $I_1$ and $I_2$ are 
both nonzero, there are exactly sixty Richelot isogenies from $C_1$ to $C_2$, up
to isomorphism. If exactly one of $I_1$ and $I_2$ is zero, there are exactly
twelve Richelot isogenies from $C_1$ to $C_2$, up to isomorphism. If both $I_1$
and $I_2$ are zero, there are exactly four Richelot isogenies from $C_1$ 
to~$C_2$, up to isomorphism.
\end{theorem}

Supersingular curves have Jacobians isogenous to products of supersingular 
elliptic curves.  The following lemmas tell us a little more about the smallest
isogenies $E \times E \to J$ and $J \to E \times E$ and about polarizations on 
$E \times E$ obtained from~$J$.  To state the lemmas, we must first set some 
notation.

Let $E$ be the unique supersingular elliptic curve over~$k$.  The endomorphism 
ring of $E$ is a maximal order in the quaternion algebra $H$ over $\BQ$ that is
ramified at $2$ and infinity. We write $H = Q\langle i,j\rangle$ where 
$i^2 = j^2 = -1$ and $ij = -ji$, and we may take $\End E$ to be the order $\ord$ 
containing $i$, $j$, and $(1+i+j+ij)/2$.  There is a unique two-sided prime 
$\frakp$ of $\ord$ lying above~$2$; the quotient $\ord/\frakp$ is isomorphic 
to~$\BF_4$.

The elliptic curve $E$ has a unique principal polarization 
$p\colon E \to \hat{E}.$ We let $P\colon E \times E \to \hat{E} \times \hat{E}$
be the product polarization $p \times p$.

Let $M$ be the endomorphism of $E \times E$ given by the matrix
\[
\left[
\begin{matrix} 
      2 &    1 + i \\  
  1 - i &    2 
\end{matrix}
\right]
\]
in $M_2(\ord)$, and let $M'$ be the endomorphism
\[
\left[
\begin{matrix} 
      2 &   -1 - i \\
 -1 + i &   2 
\end{matrix}
\right].
\]

\begin{lemma}
\label{L:one}
Let $J$ be a $2$-dimensional supersingular Jacobian over $k$ and let $\psi_0$ be
a $2$-isogeny from $J$ to $E \times E$.  Then every $2$-isogeny
$\psi\colon J \to E \times E$ is of the form $\alpha \circ \psi_0$ for an 
automorphism $\alpha$ of $E \times E$.
\end{lemma}

\begin{proof}
As we noted above, $J$ has only one subgroup scheme of rank~$2$, so $\psi$ and 
$\psi_0$ have the same kernel. Therefore $\psi$ factors through~$\psi_0$.
\end{proof}

\begin{lemma}
\label{L:two}
Let $J$ be a $2$-dimensional supersingular Jacobian over $k$ and let $\varphi_0$
be a $2$-isogeny from $E \times E$ to~$J$.  Then every $2$-isogeny
$\varphi\colon E \times E \to J$ is of the form  $\varphi_0 \circ \alpha$ for an
automorphism $\alpha$ of $E \times E$.
\end{lemma}

\begin{proof}
The $2$-isogenies from $E \times E$ to $J$ correspond by duality to the
$2$-isogenies from $\hat{J}$ to $\hat{E} \times\hat{E}$.  Since $\hat{J}\cong J$
and $\hat{E} \times\hat{E} \cong E \times E$, Lemma~\ref{L:two} is just a 
restatement of Lemma~\ref{L:one}.
\end{proof}

\begin{lemma}
\label{L:three}
Let $J$ be a $2$-dimensional supersingular Jacobian over~$k$. There is a
degree-$2$ isogeny $\varphi\colon E \times E \to J$ that pulls the canonical
polarization of $J$ back to $P \circ M$, and there is a degree-$2$ isogeny 
$\varphi': E \times E \to J$ that pulls the canonical polarization back to
$P \circ M'$.
\end{lemma}

\begin{proof}
As we noted at the end of Section~\ref{S:review}, every supersingular Jacobian
is the target of a $2$-isogeny from $E\times E$. Let $\varphi$ be any such
isogeny. Then $\varphi$ pulls back the canonical polarization of $J$ to
\emph{some} degree-$4$ polarization of $E \times E$ with kernel 
$\alphatwo \times \alphatwo$.  But from~\cite{IbukiyamaKatsuraOort1986} we know
that the number of isomorphism classes of such polarizations is equal to the
class number $H_2(1,2)$ of the nonprincipal genus in $H \times H$, and this
class number is~$1$.  So we can modify $\varphi$ by automorphisms of
$E \times E$ in order to get the pullback of the canonical polarization to be
$P \circ M$ or $P \circ M'$.
\end{proof}

\section{Proof of the theorem}
\label{S:proof}
Let $(J_1, \lambda_1)$ and $(J_2, \lambda_2)$ be the canonically polarized 
Jacobians of $C_1$ and~$C_2$, respectively. From Lemma~\ref{L:three} we know
that there is a degree-$2$ isogeny $\varphi_2\colon E \times E \to J_2$ that
pulls the principal polarization $\lambda_2$ back to the degree-$4$ polarization 
$P \circ M$, and there is a degree-$2$ isogeny 
${\varphi_1\colon E \times E \to J_1}$ that pulls the principal polarization 
$\lambda_1$ back to the degree-$4$ polarization $P \circ M'$.

Let $\psi_1$ be the composite isogeny
\[
\xymatrix{
J_1\ar[rr]^{\lambda_1} && 
     \hat{J_1}\ar[rr]^{\hat{\varphi_1}} &&
          \hat{E}\times\hat{E}\ar[rr]^{P^{-1}} &&
               E\times E.
}
\]

\begin{claim}
\label{Claim}
Every Richelot isogeny from $C_1$ to $C_2$ can be written as a composition 
$\varphi_2\circ \alpha\circ \psi_1$ for some automorphism $\alpha$ of the 
polarized variety $(E \times E, P\circ M)$, and every such composition is a 
Richelot isogeny. 
\end{claim}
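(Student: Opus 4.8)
The plan is to handle the two halves of the claim separately, reducing both to the single polarization identity
\[
\hat{\psi_1}\circ(P\circ M)\circ\psi_1 \;=\; 2\lambda_1 ,
\]
which I would prove first. Unwinding $\psi_1 = P^{-1}\circ\hat{\varphi_1}\circ\lambda_1$ and using that $P$ and $\lambda_1$ are symmetric (so $\hat P = P$, $\hat{\lambda_1} = \lambda_1$, and $\widehat{\hat{\varphi_1}} = \varphi_1$) gives $\hat{\psi_1} = \lambda_1\circ\varphi_1\circ P^{-1}$, and the left-hand side collapses to $\lambda_1\circ(\varphi_1\circ M\circ\psi_1)$. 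Since $\varphi_1$ pulls $\lambda_1$ back to $P\circ M'$, that is $\hat{\varphi_1}\circ\lambda_1\circ\varphi_1 = P\circ M'$, we have $\psi_1\circ\varphi_1 = M'$; combined with the elementary matrix identity $MM' = 2$ in $M_2(\ord)$ this yields $(\varphi_1\circ M\circ\psi_1)\circ\varphi_1 = \varphi_1\circ MM' = [2]_{J_1}\circ\varphi_1$, and cancelling the surjection $\varphi_1$ on the right gives $\varphi_1\circ M\circ\psi_1 = [2]_{J_1}$, hence the identity. (Here and below I use freely that an isogeny of abelian varieties is at once an epimorphism and a monomorphism, so it may be cancelled on either side.)

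Given the identity, the first half is a short diagram chase. For an automorphism $\alpha$ of $(E\times E, P\circ M)$ I set $\varphi = \varphi_2\circ\alpha\circ\psi_1$, which is an isogeny since each factor is. Dualizing and using the defining property $\hat{\varphi_2}\circ\lambda_2\circ\varphi_2 = P\circ M$ of $\varphi_2$ together with $\hat\alpha\circ(P\circ M)\circ\alpha = P\circ M$, one computes that $\hat\varphi\circ\lambda_2\circ\varphi = \hat{\psi_1}\circ(P\circ M)\circ\psi_1 = 2\lambda_1$, so $\varphi$ fits into diagram~\eqref{Diagram1} and is a Richelot isogeny.

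For the converse I would start from an arbitrary Richelot isogeny $\varphi\colon J_1\to J_2$ and first pin down its kernel. Comparing degrees in $\hat\varphi\circ\lambda_2\circ\varphi = 2\lambda_1$ gives $(\deg\varphi)^2 = \deg[2]_{J_1} = 16$, so $\ker\varphi$ is a rank-$4$ subgroup scheme of $J_1[2]$, hence a nontrivial local--local group scheme, hence contains a copy of $\alphatwo$. But $J_1$ has a unique rank-$2$ subgroup scheme, namely $\ker\psi_1$ (as in the proof of Lemma~\ref{lemma1}), and it is a copy of $\alphatwo$; therefore $\ker\psi_1\subseteq\ker\varphi$, so $\varphi$ factors as $\varphi = \varphi'\circ\psi_1$ for a degree-$2$ isogeny $\varphi'\colon E\times E\to J_2$. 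By Lemma~\ref{lemma2} I may write $\varphi' = \varphi_2\circ\alpha$ for some automorphism $\alpha$ of $E\times E$, so $\varphi = \varphi_2\circ\alpha\circ\psi_1$; it remains to promote $\alpha$ to a polarized automorphism. Substituting this expression back into the Richelot relation and using the identity a second time gives
\[
\hat{\psi_1}\circ\bigl(\hat\alpha\circ(P\circ M)\circ\alpha\bigr)\circ\psi_1 \;=\; \hat{\psi_1}\circ(P\circ M)\circ\psi_1 ;
\]
cancelling $\psi_1$ on the right and $\hat{\psi_1}$ on the left then forces $\hat\alpha\circ(P\circ M)\circ\alpha = P\circ M$, i.e.\ $\alpha\in\Aut(E\times E, P\circ M)$, as required.

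I expect the identity $\hat{\psi_1}\circ(P\circ M)\circ\psi_1 = 2\lambda_1$ to be the main obstacle: it is the one place where the particular endomorphisms $M$ and $M'$ matter, through $MM' = 2$, and where one must be scrupulous about duals — which polarizations are symmetric, the direction of $\hat{\varphi_1}$, and the fact that $\varphi_1$ may be cancelled. The only other delicate point is the assertion that $\ker\varphi$ contains $\alphatwo$, which rests on the general fact that a nontrivial finite local--local commutative group scheme over an algebraically closed field contains a copy of $\alphatwo$, together with the uniqueness of $\alphatwo$ in $J_1$ recorded in Section~\ref{S:review}. The degree count, the factorization through $\psi_1$, and the appeals to Lemmas~\ref{lemma2} and~\ref{lemma3} are all routine.
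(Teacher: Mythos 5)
Your proposal is correct and follows essentially the same route as the paper: both rest on the polarization identity $\hat{\psi_1}\circ(P\circ M)\circ\psi_1 = 2\lambda_1$ (proved from $\hat{\varphi_1}\circ\lambda_1\circ\varphi_1 = P\circ M'$ and $MM'=2$, with a cancellation of $\varphi_1$), stack diagrams for the forward direction, and for the converse use the uniqueness of $\alphatwo$ in $J_1$ to factor $\varphi$ through $\psi_1$ and then invoke Lemma~\ref{lemma2}. Your explicit cancellation of $\psi_1$ and $\hat{\psi_1}$ at the end is just a more spelled-out version of the paper's observation that the middle square of the stacked diagram commutes with $P\circ M$ on both horizontal arrows.
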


\begin{proof} 
Let us begin by showing that the diagram
\begin{equation}
\label{EQ:Diagram2}
\begin{gathered}
\xymatrix{
J_1       \ar[rr]^{2\lambda_1}\ar[d]_{\psi_1} && \hat{J_1} \\
E\times E \ar[rr]^{P\circ M}                  && \hat{E}\times\hat{E} \ar[u]_{\hat{\psi_1}} 
}
\end{gathered}
\end{equation}
is commutative.  Using the fact that 
$\psi_1 = P^{-1}\circ \hat{\varphi_1}\circ \lambda_1$,
we check that
\begin{align*}
 \hat{\psi_1}\circ P\circ M\circ \psi_1 
       & = \hat{\lambda_1}\circ \varphi_1\circ \hat{P^{-1}}\circ P\circ M\circ P^{-1}\circ \hat{\varphi_1}\circ \lambda_1\\
       & = \hat{\lambda_1}\circ \varphi_1\circ M\circ P^{-1}\circ \hat{\varphi_1}\circ \lambda_1 \\
       & \blankrel \phantom{2\circ \lambda_1} \text{\qquad[because $\hat{P} = P$, since $P$ is a polarization]}\\
       & = \hat{\lambda_1}\circ \varphi_1\circ M\circ P^{-1}\circ \hat{\varphi_1}\circ \lambda_1\circ \varphi_1\circ \varphi_1^{-1}\\
       & = \hat{\lambda_1}\circ \varphi_1\circ M\circ P^{-1}\circ P\circ M'\circ \varphi_1^{-1} \\
       & \blankrel \phantom{2\circ \lambda_1} \text{\qquad[because $\varphi_1$ pulls $\lambda_1$ back to $P\circ M'$]}\\
       & = \hat{\lambda_1}\circ \varphi_1\circ M\circ M'\circ \varphi_1^{-1}\\
       & = \hat{\lambda_1}\circ \varphi_1\circ 2\circ \varphi_1^{-1} \\
       & \blankrel \phantom{2\circ \lambda_1} \text{\qquad[because $M\circ M'$ is multiplication by $2$]}\\
       & = 2\circ \hat{\lambda_1}\\
       & = 2\circ \lambda_1 \text{\qquad[because $\lambda_1 = \hat{\lambda_1}$].}
\end{align*}
Also note that the diagram
\begin{equation}
\label{EQ:Diagram3}
\begin{gathered}
\xymatrix{
E\times E  \ar[rr]^{P\circ M} \ar[d]_{\varphi_2}  && \hat{E}\times\hat{E} \\
J_2        \ar[rr]_{\lambda_2}                    && \hat{J_2} \ar[u]_{\hat{\varphi_2}}
}
\end{gathered}
\end{equation}
is commutative, by the definition of~$\varphi_2$.

Now suppose that $\alpha$ is an automorphism of $(E \times E, P\circ M)$. This 
means that the diagram
\begin{equation}
\label{EQ:Diagram4}
\begin{gathered}
\xymatrix{
 E\times E \ar[rr]^{P\circ M} \ar[d]_{\alpha} &&  \hat{E}\times \hat{E} \\
 E\times E \ar[rr]_{P\circ M}                 &&  \hat{E}\times \hat{E} \ar[u]_{\hat{\alpha}}
}
\end{gathered}
\end{equation}
is commutative. Stacking Diagrams~\eqref{EQ:Diagram2}, \eqref{EQ:Diagram4}, 
and~\eqref{EQ:Diagram3} on top of one another, we get 
Diagram~\eqref{EQ:Diagram1}. Thus, given~$\alpha$, we get a Richelot isogeny 
from $C_1$ to~$C_2$.

Now suppose we are given a Richelot isogeny $\varphi$ from $C_1$ to~$C_2$. The 
kernel of $\varphi$ is a local-local subgroup scheme of~$J_1[2]$. This kernel 
must contain a copy of~$\alphatwo$, and there is a unique $\alphatwo$ in 
$J_1[2]$. So the isogeny $\varphi\colon J_1 \to J_2$ must factor 
through~$\psi_1$, say $\varphi = \gamma\circ\psi_1$ for an isogeny 
$\gamma:E\times E\to J_2$, and it follows that the middle arrow of the following
diagram is $P\circ M$:
\[
\xymatrix{
 J_1       \ar[rr]^{2\lambda_1} \ar[d]_{\psi_1}   && \hat{J_1} \\
 E\times E \ar[rr]^{P\circ M}   \ar[d]_{\gamma}   && \hat{E}\times \hat{E} \ar[u]_{\hat{\psi_1}}\\
 J_2       \ar[rr]^{\lambda_2}                    && \hat{J_2}\rlap{.}     \ar[u]_{\hat{\gamma}}
}
\]
Lemma~\ref{L:two} shows that $\gamma$ is $\varphi_2\circ \alpha$ for some 
automorphism $\alpha$ of $E\times E$, so we can further expand the diagram to 
get
\[
\xymatrix{
 J_1       \ar[rr]^{2\lambda_1} \ar[d]_{\psi_1}      && \hat{J_1} \\
 E\times E \ar[rr]^{P\circ M}   \ar[d]_{\alpha}      && \hat{E}\times \hat{E} \ar[u]_{\hat{\psi_1}}\\
 E\times E \ar[rr]^{P\circ M}   \ar[d]_{\varphi_2}   && \hat{E}\times \hat{E} \ar[u]_{\hat{\alpha}}\\
 J_2       \ar[rr]^{\lambda_2}                       && \hat{J_2}\rlap{.}     \ar[u]_{\hat{\varphi_2}}
}
\]
The middle part of the diagram shows that $\alpha$ is an automorphism of the 
principally-polarized variety ${(E\times E, P\circ M)}$.  This proves the claim.
\end{proof}

Every automorphism $\alpha$ of $E\times E$ can be written as an invertible 
matrix $A$ in~$M_2(\ord)$.  The automorphism $\alpha$ respects the polarization
$P\circ M$ if and only if we have $M = A^* M A$, where $A^*$ is the conjugate
transpose of~$A$.  (This is because the Rosati involution on $\End E\times E$
determined by the product polarization $P$ is equal to the conjugate transpose.)
With a computer algebra system it is not hard to compute\footnote{
     See Verification 4.1a in the Magma file.
} that there are $1920$ invertible matrices $A$ in $M_2(\ord)$ that satisfy this
condition.  Thus, $\#\Aut(E\times E,P\circ M) = 1920$.

Suppose $\beta$ is an automorphism of $(J_1, \lambda_1)$. Then $\beta$ must take
the unique copy of $\alphatwo$ in $J_1$ to itself, so $\beta$ gives an
automorphism of the polarized variety $(E\times E, P\circ M)$. This gives us an
(injective) homomorphism 
\[
\Aut(J_1, \lambda_1) \to \Aut(E\times E, P\circ M)
\]
that we denote by~$F_1$.

Now suppose $A$ is an automorphism of $(E\times E, P\circ M)$. If $A$ takes
$\ker \varphi_2$ to itself, then $A$ descends to give an automorphism of 
$(J_2, \lambda_2)$. Lemmas~1.4 and~1.5 of~\cite{KatsuraOort1987} show that every 
automorphism of $(J_2, \lambda_2)$ comes from a unique such~$A$. Thus, we get an
(injective) homomorphism
\[
\Aut(J_2, \lambda_2) \to \Aut(E\times E, P\circ M)
\]
that we denote by~$F_2$.

Claim~\ref{Claim} leads us to this key observation: The set of Richelot 
isogenies from $C_1$ to $C_2$ (up to isomorphism) is in bijection with the 
orbits of $\Aut(E\times E, P\circ M)$ under the combined actions of 
$\image F_1$ on the right and $\image F_2$ on the left. To make use of this 
observation we have to say a little more about $F_1$ and~$F_2$. First we look
at~$F_2$.

Suppose $A$ is an automorphism of $E\times E$. How does $A$ permute the various
copies of $\alphatwo$ sitting inside $E\times E$? Recall that the copies of 
$\alphatwo$ correspond to elements $[i\col j]$ of~$\PP^1(k)$. Let $B$ in 
$\GL_2 \BF_4$ be the reduction of $A$ modulo the two-sided prime $\frakp$
over~$2$. Using Dieudonn\'e modules, it is not hard to show that $A$ takes 
$[i\col j](\alphatwo)$ to $[i'\col j'](\alphatwo)$, where $[i'\col j']$ is 
$B[i\col j]$, under the natural action of $\GL_2$ on~$\PP^1$. (A description of
the Dieudonn\'e module of $E$ and of the action of $\End E$ on this module is 
given in Section 4.2 of~\cite{Waterhouse1969}.)  

By explicit calculation\footnote{
     See Verification 4.1b in the Magma file.
} we find that the image of $\Aut(E\times E, P\circ M)$ in $\GL_2 \BF_4$ is 
$\SL_2 \BF_4$. Let $G$ be the kernel of the reduction map from 
$\Aut(E\times E, P\circ M)$ to $\SL_2 \BF_4$, so that $\#G = 32$. We see that
the image of $F_2$ contains~$G$, and is equal to $G$ if $C_2$ is not the special
curve $y^2 + y = x^5$.

We can also say a little more about~$F_1$.  The Frobenius is a Richelot isogeny
from $C_1$ to $C_1'$ (where $C_1'$ denotes the curve defined by the same
equations as~$C_1$, but with all coefficients squared), so we can always draw a
diagram
\[
\xymatrix{
 J_1       \ar[rr]^{ 2\lambda_1} \ar[d]_{\psi_1}   && \hat{J_1} \\
 E\times E \ar[rr]^{P\circ M}    \ar[d]            && \hat{E}\times \hat{E} \ar[u]_{\hat{\psi_1}} \\
 J_1'      \ar[rr]^{ \lambda_1'}                   && \hat{J_1'} \ar[u]
}
\]
where the map $J_1 \to  J_1'$ on the left is Frobenius.  It follows that the 
image of $F_1$ contains~$G$, and is equal to $G$ unless $C_1$ is the special 
curve $y^2 + y = x^5$.

Now we use the key observation above to complete the proof of the theorem. If 
neither $C_1$ nor $C_2$ is the special curve, then the images of $F_1$ and 
$F_2$ are both equal to the normal subgroup $G$ of $\Aut(E\times E, P\circ M)$,
so the double cosets
\[
  G \,\backslash \Aut(E\times E, P\circ M) / G
\]
are in bijection with $\SL_2 \BF_4$, a group with $60$ elements.

If exactly one of $C_1$ or $C_2$ is isomorphic to the special curve, then the 
Richelot isogenies are in bijection with the cosets of $\SL_2 \BF_4$ by a 
non-normal subgroup of order~$5$. There are $12$ orbits.

If $C_1$ and $C_2$ are both isomorphic to the special curve, then the Richelot
isogenies are in bijection with the double cosets of $\SL_2 \BF_4$ by a 
non-normal subgroup of order $5$ on the left, and a (possibly different) such 
subgroup on the right. By direct calculation, we find that no matter what the 
subgroups, there are $4$ orbits.

This proves the theorem.
\qed

\section{Explicit constructions for purely inseparable Richelot isogenies}
\label{S:explicit}
Theorem~\ref{T:main} tells us how many Richelot isogenies there are between
two supersingular genus-$2$ curves over an algebraically closed field $k$
of characteristic~$2$. In this section we produce explicit constructions that
exhibit all of these isogenies.

In Sections~\ref{SS:dihedral} and~\ref{SS:degenerate} we present two
constructions of Richelot isogenies between supersingular genus-$2$ curves. In
Section~\ref{SS:FV} we discuss the Frobenius and Verschiebung isogenies, and we
prove that for supersingular Jacobians they are never isomorphic to one another.
In Section~\ref{SS:parameters} we show how to parametrize the Richelot isogenies
discussed in Section~\ref{SS:dihedral}, and in Section~\ref{SS:counting} we
prove that the constructions we have presented account for all of the Richelot
isogenies between two supersingular Jacobians.

\subsection{The dihedral construction}
\label{SS:dihedral}
Let $D$ be a supersingular hyperelliptic curve of genus~$4$ over~$k$, and
suppose $\alpha$ and $\beta$ are involutions of $D$ that anti-commute --- that
is, suppose $\alpha\beta = \iota\beta\alpha$, where $\iota$ is the 
hyperelliptic involution of~$D$. 

Let $G$ be the subgroup of the automorphism group $\Aut D$ of $D$ generated
by the involutions $\alpha$ and~$\beta$. The anti-commutation of $\alpha$ and
$\beta$ shows that $G$ is a dihedral group of order~$8$, so we can draw a 
diagram of the intermediate curves in the Galois cover $D \to D/G\cong \PP^1$
as follows:
\begin{equation}
\label{EQ:dihedral}
\begin{gathered}
\xymatrix{
&& && D\ar[d]^{\langle\iota\rangle} && && \\
C_1'\ar[drr]\ar@{<-}[urrrr]^{\langle\iota\alpha\rangle}
&& C_1\ar[d]\ar@{<-}[urr]_{\langle\alpha\rangle}
&& \PP^1\ar[dll]\ar[drr]
&& C_2\ar[d]\ar@{<-}[ull]^{\langle\beta\rangle}
&& C_2'\ar[dll]\ar@{<-}[ullll]_{\langle\iota\beta\rangle} \\
&& \PP^1\ar[drr] && \PP^1\ar@{<-}[u] && \PP^1\ar[dll] && \\
&& && \PP^1\rlap{.}\ar@{<-}[u] && && \\
}
\end{gathered}
\end{equation}
Here the curves $C_1$ and $C_1'$ are isomorphic to one another because $\alpha$
and $\iota\alpha$ are conjugate to one another in~$G$; likewise, $C_2$ and 
$C_2'$ are isomorphic to one another.

From this large $D_8$ diagram we can extract the $V_4$ diagram from the upper
left:
\begin{equation}
\label{EQ:V4left}
\begin{gathered}
\xymatrix{
&& D\ar[dll]_{\langle\iota\alpha\rangle}\ar[drr]^{\langle\iota\rangle} && \\
C_1'\ar[drr] && C_1\ar@{<-}[u]_{\langle\alpha\rangle} && \PP^1\ar[dll] \\
&& \PP^1\rlap{.}\ar@{<-}[u] &&\\
}
\end{gathered}
\end{equation}

By a result of Kani and Rosen~\cite[Theorem~C, p.~309]{KaniRosen1989}, this
$V_4$ diagram shows that the Jacobian of $D$ decomposes (up to isogeny) as the
sum of the Jacobians of $C_1$ and~$C_1'$, so that in particular the genus of
$C_1$ must be~$2$. Likewise, the genus of $C_2$ must be~$2$. Also, since $D$ is
supersingular, so must be $C_1$ and~$C_2$.

Let $\varphi_1$ be the natural degree-$2$ map from $D$ 
to~$C_1 \colonequals  D/\langle\alpha\rangle$ and let $\varphi_2$ be the natural
map from $D$ to~$C_2 \colonequals D/\langle\beta\rangle$. Each of the curves
$C_1$, $C_2$, and $D$ has a unique Weierstrass point, and each map $\varphi_i$
takes the Weierstrass point of $D$ to that of~$C_i$. Let $J$ be the Jacobian 
of~$D$, and for each~$i$ let $J_i$ be the Jacobian of~$C_i$.

\begin{proposition}
\label{P:dihedral}
The map $\varphi_{2*}\varphi_1^*\colon J_1\to J_2$ is a Richelot isogeny with 
dual isogeny $\varphi_{1*}\varphi_2^*$, and the curve $D$ and the involutions 
$\alpha$ and $\beta$ can be recovered \textup{(}up to isomorphism\textup{)} from
this isogeny.
\end{proposition}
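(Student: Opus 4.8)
The plan is to prove the two assertions of the proposition separately: first that $\varphi_{2*}\varphi_1^*$ is a Richelot isogeny with the stated dual, and second that the data $(D,\alpha,\beta)$ can be reconstructed from the isogeny. For the first part, I would work directly with the defining diagram~\eqref{Diagram1}. Write $\lambda$, $\lambda_1$, $\lambda_2$ for the canonical principal polarizations of $J$, $J_1$, $J_2$. The pushforward/pullback maps of a degree-$2$ map of curves with a common Weierstrass point satisfy the standard identities: $\varphi_{i*}\varphi_i^* = 2$ on $J_i$ (since $\varphi_i$ has degree $2$), and, crucially, the pullback $\varphi_i^*$ and pushforward $\varphi_{i*}$ are adjoint with respect to the canonical polarizations, i.e.\ $\hat\varphi_i^{\,*} = \lambda^{-1}\circ \varphi_{i*}^{\vee}\circ\lambda_i$ or, more usefully, $\lambda\circ\varphi_i^* = \widehat{\varphi_{i*}}\circ\lambda_i$ up to the identifications $\hat J\cong J$. (This is the content of ``$\varphi_i^*$ is dual to $\varphi_{i*}$.'') Combining these, one computes that the dual of $\varphi_{2*}\varphi_1^*$ is $\varphi_{1*}\varphi_2^*$ (as a map $J_2\to J_1$ after dualizing the polarizations), and that the composite $\widehat{\varphi_{1*}\varphi_2^*}\circ\lambda_2\circ(\varphi_{2*}\varphi_1^*)$ equals $\varphi_{1*}\varphi_2^*\varphi_{2*}\varphi_1^*$ composed with $\lambda_1$, which is $\varphi_{1*}\circ(2)\circ\varphi_1^*\circ\lambda_1 = 2\varphi_{1*}\varphi_1^*\circ\lambda_1 = 4\lambda_1 = 2\cdot(2\lambda_1)$. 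Wait --- I would need to be careful about the exact placement of the factor $2$ versus $2\lambda_1$; the point is that the square of the diagram closes up to multiplication by $2$ on $J_1$, which is exactly condition~\eqref{Diagram1}. So the first assertion reduces to carefully bookkeeping these adjunction identities, which I would do using the $V_4$-diagram~\eqref{EQ:V4left} to express $\varphi_1^*\varphi_{1*}$, and the analogous diagram on the right for $\varphi_2^*\varphi_{2*}$, in terms of $1+\iota\alpha$ acting on $J = \operatorname{Jac} D$ --- the key geometric input being that $\varphi_1^*\varphi_{1*} = 1 + (\iota\alpha)^*$ on $J$ (the trace of the degree-$2$ cover $D\to C_1$) and similarly $\varphi_2^*\varphi_{2*} = 1 + (\iota\beta)^*$.

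For the reconstruction assertion, I would argue as follows. From the Richelot isogeny $\varphi_{2*}\varphi_1^*\colon J_1\to J_2$ we recover its kernel $K\subset J_1$, a rank-$2$ (necessarily local-local, in the supersingular setting) subgroup scheme; equivalently we recover $J_1$, $J_2$ and the isogeny itself. The claim is that the triple $(D,\alpha,\beta)$ is determined up to isomorphism. The natural strategy is: the curve $D$ sits in diagram~\eqref{EQ:V4left} as a $V_4$-cover of $\PP^1$ with quotients $C_1$ and $C_1'$, and $C_1'\cong C_1$; more to the point, $D$ is recovered as (the smooth model of) the fiber product $C_1\times_{\PP^1}C_1'$ for the appropriate $\PP^1$, and all of this $\PP^1$-structure is visible from the isogeny. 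Concretely, the kernel $\varphi_1^*$ on $J_1$ of... hmm, rather: the subgroup $\ker\varphi_{1*}\subset J$ --- but we don't yet have $J$. The cleanest route is probably to observe that $D$ is the unique genus-$4$ curve receiving degree-$2$ maps to both $C_1$ and $C_2$ compatibly with the Weierstrass points and inducing the given isogeny; one then shows such a $D$ is unique up to isomorphism and that $\alpha$, $\beta$ are forced as the covering involutions. The uniqueness can be pinned down by noting that $J = \operatorname{Jac} D$ must be the ``pullback'' $J_1\times_{\text{(something)}}J_2$ determined by the isogeny together with its factorization through $E\times E$, invoking that a supersingular Jacobian has a \emph{unique} subgroup scheme of each small rank (as used repeatedly in Lemmas~\ref{lemma1}--\ref{lemma3}), so there is no ambiguity in the intermediate $E\times E$ and hence none in $J$ or $D$.

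I expect the main obstacle to be the reconstruction half, specifically making rigorous the claim that $D$ (with its two involutions) is \emph{uniquely} determined, rather than merely exhibited. The subtlety is that a priori several non-isomorphic genus-$4$ curves might sit in a diagram like~\eqref{EQ:dihedral} producing the same isogeny $J_1\to J_2$; ruling this out requires genuinely using supersingularity --- in particular the ``unique $\alphatwo$'' and class-number-one facts from Section~\ref{S:review} --- to show that the $D_8$-diagram is rigid once its bottom isogeny is fixed. A safer and more robust approach, which I would adopt if the direct uniqueness argument gets delicate, is to defer: prove only the \emph{existence} part ($\varphi_{2*}\varphi_1^*$ is a Richelot isogeny with the stated dual) cleanly here, and handle recoverability later in Section~\ref{SS:counting}, where the counting argument already matches the dihedral construction against all $60$ (resp.\ $12$, $4$) Richelot isogenies; injectivity of the construction on isomorphism classes then follows from a dimension/count comparison rather than from an explicit inverse. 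But as a first pass I would attempt the direct reconstruction: recover $\PP^1 = D/G$ as the common image, recover the six intermediate $\PP^1$'s and the two elliptic curves $E_P$, $E_{P'}$ etc.\ from the isogeny decomposition, and then realize $D$ as the normalization of the relevant fiber product, with $\alpha,\beta$ its deck transformations.
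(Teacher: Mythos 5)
Your first half has the right skeleton but stalls at exactly the point that matters. The identity $\varphi_{2}^*\varphi_{2*}=2$ that you substitute in your computation is false: the degree-$2$ identity $\varphi_{i*}\varphi_i^*=2$ holds on $J_i$, whereas the composition in the other order, $\varphi_i^*\varphi_{i*}$, acting on $J=\operatorname{Jac}D$, is the trace $1+\alpha^*$ (resp.\ $1+\beta^*$) --- note it is $\alpha$, not $\iota\alpha$, since $C_1=D/\langle\alpha\rangle$. This is why you land on $4\lambda_1$ instead of $2\lambda_1$; the ``careful bookkeeping'' you defer is not bookkeeping but the one genuinely nontrivial input: the anti-commutation $\alpha\beta=\iota\beta\alpha$ gives $\alpha^*\beta^*=-\beta^*\alpha^*$, so that on the subvariety $J_\alpha$ where $\alpha^*$ acts trivially one gets $(1+\alpha^*)(1+\beta^*)P=2P+\beta^*P-\beta^*P=2P$. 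Transporting this through $\varphi_1^*\colon J_1\to J_\alpha$ yields $\varphi_{1*}\varphi_2^*\varphi_{2*}\varphi_1^*=2$ on $J_1$, and then the adjunction $\hat{\varphi_{i*}}=\lambda\varphi_i^*\lambda_i^{-1}$ closes diagram~\eqref{Diagram1}. Without invoking the anti-commutation your computation does not, and cannot, produce the factor $2$.

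The reconstruction half is where the real gap lies: you never identify the mechanism that makes $D$ recoverable, and both of your proposed routes fail. The paper's device is a \emph{canonical correspondence}: for each $P\ne W_1$ on $C_1$, the point $\psi([P-W_1])\in J_2$ has a unique representation $[Q+R-2W_2]$, and the Zariski closure of the resulting locus $\{(P,Q)\}$ is a divisor $\calW\subset C_1\times C_2$ attached intrinsically to $\psi$; for $\psi=\varphi_{2*}\varphi_1^*$ one computes $\calW=(\varphi_1\times\varphi_2)(D)$, and a degree count (the projections cannot both have degree $1$ since $\deg\psi=4$) forces $D\cong\calW$ with $\varphi_1,\varphi_2$ the projections, whence $\alpha,\beta$. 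Your fiber-product idea founders on exactly the point you flag --- the intermediate $\PP^1$ is not visible from the isogeny alone --- and your fallback of deferring to the counting in Section~\ref{SS:counting} is not available: Theorem~\ref{T:60} \emph{cites} Proposition~\ref{P:dihedral} to convert the count of $(a,c)$ pairs into a count of isogenies, and moreover the same correspondence $\calW$ is what later distinguishes dihedral isogenies from degenerate ones and from Frobenius/Verschiebung (by whether $\calW$ is irreducible, a union of two graphs, or contains a horizontal component). So the recoverability statement cannot be outsourced to the counting without making that argument circular.
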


\begin{proof}
Note that the homomorphism $\varphi_1^*\varphi_{1*}$ of $J$ is simply
$1 + \alpha^*$, and likewise $\varphi_2^*\varphi_{2*} = 1 + \beta^*$. If we let
$J_\alpha$ be the subvariety of $J$ where $\alpha^*$ acts trivially, then we
have a diagram
\[
\xymatrix{
J_1\ar[rr]\ar[d]_{\varphi_1^*} 
         && J_2\ar[rr]\ar[d]^{\varphi_2^*} 
                 && J_1\ar[d]^{\varphi_1^*}\phantom{.} \\
J_\alpha\ar[rr]_{1 + \beta^*}\ar[urr]_{\varphi_{2*}} 
         && J\ar[rr]_{1 + \alpha^*}\ar[urr]_{\varphi_{1*}} 
                 && J_\alpha.\\
}
\]
For every point $P\in J_\alpha(k)$ we have
\begin{align*}
(1 + \alpha^*)(1 + \beta^*)(P) 
 &= (1 + \alpha^* + \beta^* + \alpha^*\beta^*)(P)  \\
 &= (1 + \alpha^* + \beta^* - \beta^*\alpha^*)(P)  \\
 &= 2P + \beta^*(P) - \beta^*(P)\\
 &= 2P,
\end{align*}
so the composite map $J_\alpha\to J\to J_\alpha$ on the bottom of the diagram
is multiplication by~$2$. It follows that the composite map $J_1\to J_2\to J_1$
at the top of the diagram is also multiplication by~$2$.

Let $\lambda$, $\lambda_1$, and $\lambda_2$ be the canonical principal
polarizations of $J$, $J_1$, and~$J_2$, respectively. Lemma 4.4~(p.~186) 
of~\cite{HoweLauter2012} shows that the homomorphisms\footnote{
    The cited lemma mistakenly refers to ``isogenies'' when it should refer
    to ``homomorphisms.''}
$\varphi_{1*}$ and $\varphi_1^*$ are dual to one another, in the sense that
\[
\hat{\varphi_{1*}} = \lambda \varphi_1^* \lambda_1^{-1}
\quad\text{and}\quad
\hat{\varphi_1^*} = \lambda_1 \varphi_{1*} \lambda^{-1},
\]
and the analogous equalities hold for $\varphi_{2*}$ and $\varphi_2^*$. In the
preceding paragraph we showed that 
${(\varphi_{1*}\varphi_2^*) (\varphi_{2*}\varphi_1^*) = 2}$ on~$J_1$, and using
the duality we just mentioned we find 
\[
(\lambda_1^{-1} \hat{\varphi_1^*} \lambda 
\lambda^{-1} \hat{\varphi_{2*}} \lambda_2) (\varphi_{2*}\varphi_1^*) = 2,
\]
so that 
\[
(\lambda_1^{-1} \hat{\varphi_{2*}\varphi_1^*} \lambda_2) 
(\varphi_{2*}\varphi_1^*) = 2,
\]
and therefore 
\[
\hat{\varphi_{2*}\varphi_1^*} \lambda_2 (\varphi_{2*}\varphi_1^*) = 2\lambda_1.
\]
This is exactly what is shown in Diagram~\eqref{EQ:Diagram1}, so 
$\varphi_{2*}\varphi_1^*$ is a Richelot isogeny from $(J_1,\lambda_1)$ to
$(J_2,\lambda_2)$.

Similarly, $\varphi_{1*}\varphi_2^*$ is a Richelot isogeny from 
$(J_2,\lambda_2)$ to $(J_1,\lambda_1)$, and the two isogenies are dual to one 
another because their compositions in both orders are equal to multiplication
by~$2$.

Now we turn to the final statement of the proposition.

Suppose we are given an isogeny $\psi\colon J_1\to J_2$. Every such isogeny
comes from a correspondence on $C_1\times C_2$, that is, a divisor on 
$C_1\times C_2$ that does not consist solely of horizontal and vertical
components. We will construct a \emph{particular} correspondence that
represents~$\psi$.

Let $W_1$ and $W_2$ be the Weierstrass points on $C_1$ and~$C_2$, respectively,
and let $P\in C_1(k)$ be an arbitrary point with $P\ne W_1$. The isogeny $\psi$
takes the class of the degree-$0$ divisor $P - W_1$ on $C_1$ to a point on the 
Jacobian of~$C_2$, and every such point other than the identity can be 
represented as the class of a divisor $Q + R - 2W_2$ in a unique way.\footnote{
    The identity has an infinite number of such representations; it is
    equal to the class of $Q + \iota(Q) - 2 W_2$ for every point~$Q$.}
For every $P$ such that $[P-W_1]$ is not in the kernel of $\psi$ we let 
$\psibar(P)$ be the set $\{Q,R\}$, where $Q$ and $R$ are the points of $C_2$ 
such that $\psi(P - W_1) = [Q + R - 2W_2]$; for $P$ such that $[P-W_1]$ is in
the kernel of $\psi$ we take $\psibar(P)$ to be the empty set. Let  $\calW_0$
be the divisor on $C_1\times C_2$ consisting of the Zariski closure of the union
over all $P$ the sets $\{(P,Q) \colon Q\in \psibar(P)\}$. If there is a $P$ such
that $\#\psibar(P) = 2$, we take $\calW = \calW_0$; if there is no such~$P$, 
then we take $\calW = 2\calW_0$. Clearly the isogeny defined by $\calW$ is equal
to~$\psi$.

Note that the divisor $\calW$ is either an irreducible curve, the union of
two distinct irreducible curves, or twice a single irreducible  curve.

What does this construction produce when $\psi$ is the isogeny
$\varphi_{2*}\varphi_1^*$, as in the statement of the theorem? Let $W$ be the
Weierstrass point on~$D$, and for a given $P\in C_1(k)$ let $Q'$ and $R'$ be the
two points of $D(k)$ that map to~$P$. The pullback (via $\varphi_1$) of the
divisor $P - W_1$ is equal to $Q' + R' - 2W$, and the push-forward (via
$\varphi_2$) of this divisor is $\varphi_2(Q') + \varphi_2(R') - 2 W_2$. Thus,
the divisor $\calW$ on $C_1\times C_2$ is nothing other than the image of $D$
under the map $\varphi_1\times\varphi_2$, so $\calW$ is also a curve. But since
the degree-$2$ maps $\varphi_1\colon D\to C_1$ and $\varphi_2\colon D\to C_2$
factor through~$\calW$, we see that either $D$ is birationally equivalent to
$\calW$ and the projection maps from $\calW$ to $C_1$ and $C_2$ have degree~$2$,
or $\calW$ is birationally equivalent to both $C_1$ and $C_2$ and the projection
maps have degree~$1$. The latter possibility is inconsistent with 
$\varphi_{2*}\varphi_1^*$ having degree~$4$, so the map
$\varphi_1\times\varphi_2\colon D\to C_1\times C_2$ gives a birational
equivalence between $D$ and~$\calW$. Under this equivalence, the projection maps
from $\calW$ to $C_1$ and $C_2$ correspond to $\varphi_1$ and~$\varphi_2$. 

Thus, $D$ and the double covers $\varphi_1$ and $\varphi_2$ can be recovered, up
to isomorphism, from the Richelot isogeny $\varphi_{2*}\varphi_1^*$, and the 
involutions $\alpha$ and $\beta$ are determined by $\varphi_1$ and~$\varphi_2$.
This proves the proposition.
\end{proof}

\begin{remark}
\label{R:Dab}
Note that the isomorphism class of Diagram~\eqref{EQ:dihedral} is completely 
determined by the curve $D$ and the two pairs of involutions 
$\{\alpha, \iota \alpha\}$ and $\{\beta, \iota \beta\}$. Furthermore, the
pair $\{\alpha, \iota \alpha\}$  is determined by the involution of the top
$\PP^1$ in Diagram~\ref{EQ:dihedral} that fixes the left $\PP^1$, and the
pair $\{\beta, \iota \beta\}$  is determined by the involution of the top
$\PP^1$ that fixes the right~$\PP^1$.
\end{remark}

\subsection{The degenerate construction}
\label{SS:degenerate}
Suppose $C_1$ is a supersingular genus-$2$ curve over~$k$, and let $\gamma$ be
an automorphism of $C_1$ such that $\gamma^2$ is the hyperelliptic 
involution~$\iota$. Associated to $\gamma$ we have the pull-back automorphism
$\gamma^*$ of the polarized Jacobian $J_1$ of~$C_1$.

\begin{proposition}
\label{P:degenerate}
The map $1 + \gamma^*\colon J_1\to J_1$ is a Richelot isogeny with dual isogeny
$1 - \gamma^*$, and the automorphism $\gamma$ can be recovered 
\textup{(}up to conjugation in the automorphism group of~$J_1$\textup{)} from
this isogeny. This isogeny cannot be obtained from the construction of 
Proposition~\textup{\ref{P:dihedral}}.
\end{proposition}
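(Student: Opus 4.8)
The plan is to verify the Richelot condition by a direct endomorphism computation, then handle the two recovery/uniqueness assertions separately. First I would show that $1+\gamma^*$ is a Richelot isogeny. Since $\gamma^2=\iota$, the pull-back $\gamma^*$ is an automorphism of $J_1$ satisfying $(\gamma^*)^2=-1$ (the hyperelliptic involution acts as $-1$ on the Jacobian). Hence $(1+\gamma^*)(1-\gamma^*)=1-(\gamma^*)^2=2$, so the composite in either order is multiplication by $2$ on $J_1$, giving $1-\gamma^*$ as the dual isogeny up to the sign conventions. To get Diagram~\eqref{Diagram1}, I would use that $\gamma^*$ is compatible with the principal polarization $\lambda_1$ in the sense that $\hat{\gamma^*}=\lambda_1(\gamma^*)^{-1}\lambda_1^{-1}$ — indeed, pull-back by an automorphism of the curve is an automorphism of the polarized Jacobian, so its Rosati dual is its inverse $-\gamma^*$. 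Then $\widehat{(1+\gamma^*)}\,\lambda_1\,(1+\gamma^*)=\lambda_1(1-\gamma^*)(1+\gamma^*)=\lambda_1\cdot 2=2\lambda_1$, which is exactly the Richelot diagram. (The kernel is a rank-$4$ local-local subgroup scheme since $\deg(1+\gamma^*)=4$ and $J_1$ is supersingular, consistent with the earlier discussion.)

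Next, the recovery statement. The isogeny $1+\gamma^*$ determines its kernel $K\subset J_1[2]$, and conversely $\gamma^*$ is recovered from $1+\gamma^*$ as $(1+\gamma^*)-1$ — but stated more carefully, I would argue that if $1+\delta^*$ equals $\beta_2\circ(1+\gamma^*)\circ\beta_1$ for automorphisms $\beta_1,\beta_2$ of $(J_1,\lambda_1)$, then comparing kernels and using that both isogenies have the target identified with the source in the same way, one gets $\delta^*$ conjugate to $\gamma^*$ inside $\Aut(J_1,\lambda_1)$. Since the automorphisms of $J_1$ coming from order-$4$ automorphisms $\gamma$ of $C_1$ with $\gamma^2=\iota$ are exactly the elements of $\Aut(J_1,\lambda_1)$ of order $4$ squaring to $-1$ (there may be a factor of $\{\pm1\}$ to track, using that $\gamma$ and $\iota\gamma=\gamma^3$ give $\gamma^*$ and $-\gamma^*=(\gamma^3)^*$, which are conjugate by the last sentence of Lemma~\ref{L:models}(2c) or (3c) precisely when $\gamma$ and $\gamma^3$ are conjugate in $\Aut C_1$ — and they are, for every order-$4$ automorphism), the automorphism $\gamma$ of $C_1$ is recovered up to conjugation.

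Finally, the non-degeneracy claim: this isogeny is not of the dihedral type. Here I would use the final statement of Proposition~\ref{P:dihedral}: a dihedral-type Richelot isogeny $\varphi_{2*}\varphi_1^*$ has an associated correspondence $\calW\subset C_1\times C_2$ that is birationally a genus-$4$ curve $D$ dominating both factors by degree-$2$ maps. For the degenerate isogeny $1+\gamma^*\colon J_1\to J_1$ (so $C_2=C_1$), I would run the same correspondence construction from the proof of Proposition~\ref{P:dihedral}: for generic $P\in C_1(k)$, $(1+\gamma^*)[P-W_1]=[P-W_1]+[\gamma(P)-W_1]=[P+\gamma(P)-2W_1]$, so $\psibar(P)=\{P,\gamma(P)\}$ and the correspondence $\calW$ is the image of $C_1$ under $P\mapsto(P,\gamma(P))$ — a \emph{curve birational to $C_1$ itself}, of genus $2$, not genus $4$, with both projections of degree $1$. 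Since the correspondence attached to any Richelot isogeny of the form produced by Proposition~\ref{P:dihedral} is (birationally) an irreducible genus-$4$ curve — or a reducible/doubled divisor whose reduced components have the geometry forced there — while the degenerate correspondence is an irreducible genus-$2$ curve with degree-$1$ projections, the two constructions are disjoint. The main obstacle I anticipate is making the recovery-up-to-conjugacy statement precise: one must pin down exactly which automorphisms of the polarized Jacobian arise as $\gamma^*$, keep careful track of the $\{\pm1\}$ ambiguity relating $\gamma$ and $\gamma^3$, and confirm that the isomorphism relation on Richelot isogenies (conjugation by $\Aut(J_1,\lambda_1)$ on both sides) matches the conjugation relation on the $\gamma$'s — this is bookkeeping with the automorphism group described in Lemma~\ref{L:models}, but it is where the argument could go wrong if the group actions are not matched correctly.
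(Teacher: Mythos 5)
Your first two parts track the paper's own argument: the identity $(1+\gamma^*)(1-\gamma^*)=1-(\gamma^*)^2=2$ together with the fact that the Rosati dual of $\gamma^*$ is $\gamma_*=-\gamma^*=(\gamma^*)^{-1}$ (the paper cites Lemma~4.4 of Howe--Lauter for this) gives exactly the diagram $\widehat{(1+\gamma^*)}\,\lambda_1\,(1+\gamma^*)=2\lambda_1$, and the recovery statement is, as in the paper, ``read off $\gamma^*$ and apply Torelli,'' with your extra bookkeeping about $\gamma$ versus $\iota\gamma=\gamma^3$ being a correct and harmless elaboration.

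The third part contains a genuine error. From $\psibar(P)=\{P,\gamma^{-1}(P)\}$ the correspondence $\calW$ must contain \emph{both} points $(P,P)$ and $(P,\gamma^{-1}(P))$ for each $P$; it is therefore the union of the diagonal and the graph of $\gamma^{-1}$, i.e.\ a divisor with \emph{two} irreducible components, each isomorphic to $C_1$. You instead describe $\calW$ as the single irreducible curve $\{(P,\gamma(P))\}$ with degree-$1$ projections, and you base the disjointness of the two constructions on the contrast ``irreducible genus $2$, degree-$1$ projections'' versus ``irreducible genus $4$, degree-$2$ projections.'' That description is internally inconsistent: a correspondence whose projections both have degree $1$ induces an isomorphism of Jacobians, not the degree-$4$ isogeny $1+\gamma^*$, and in the paper's normalization the correspondence attached to an isogeny always has bidegree matching the divisor-class computation. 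The correct distinguishing feature — and the one the paper uses — is reducibility: for the dihedral construction $\calW$ is birational to the irreducible genus-$4$ curve $D$, whereas for the degenerate construction $\calW$ is the union of two irreducible curves. Your conclusion survives once the diagonal component is restored, but as written the step fails.
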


\begin{proof}
Let $\gamma_*\in \Aut J_1$ be the push-forward of~$\gamma$. The composition 
$\gamma_*\gamma^*$ is simply multiplication by the degree of~$\gamma$, which
is~$1$; furthermore, since $\iota^* = -1$ we have $(\gamma^*)^2 = -1$, so that
$\gamma_* = -\gamma^*$. Thus we have a commutative diagram
\[
\xymatrix{
J_1\ar[rr]^{2}\ar[d]_{1 + \gamma^*} && {J_1}                \\
J_1\ar[rr]_{1}^{\sim} && {J_1}\ar[u]_{1 + \gamma_*} \rlap{.}
}
\]
From~\cite[Lemma~4.4, p.~186]{HoweLauter2012} we know that 
$\gamma_* = \lambda_1^{-1} \hat{\gamma^*} \lambda_1$, where $\lambda_1$ is the
canonical principal polarization of~$J_1$ and where $\hat{\gamma^*}$ is the 
dual isogeny of~$\gamma^*$. This means that we can extend the preceding diagram
to get
\[
\xymatrix{
J_1\ar[rr]^{2}\ar[d]_{1 + \gamma^*} && {J_1}\ar[rr]^{\lambda_1}                      && \hat{J_1}\\
J_1\ar[rr]_{1}^{\sim}               && {J_1}\ar[u]_{1 + \gamma_*}\ar[rr]_{\lambda_1} && \hat{J_1}\ar[u]_{1 + \hat{\gamma^*}} \rlap{.}
}
\]
The outer portion of this diagram is precisely Diagram~\eqref{EQ:Diagram1}, so we
find that $1 + \gamma^*$ is a Richelot isogeny from $(J_1,\lambda_1)$ to itself.
The dual isogeny is clearly $1 - \gamma^*$. Also, from the isogeny 
$1 + \gamma^*$ we can recover the automorphism $\gamma^*$ (up to conjugation),
and Torelli's theorem tells us that this specifies $\gamma$ (up to conjugation).

To show that this isogeny cannot be produced from the construction of the 
preceding section, we repeat the construction of the correspondence
$\calW\subset C_1\times C_1$ given in the proof of Proposition~\ref{P:dihedral}.
If $W_1$ is the Weierstrass point of $C_1$ and $P$ is an arbitrary point of 
$C_1$ such that $[P - W_1]$ is not in the kernel of $1 + \gamma^*$, we find that
\[
(1 + \gamma^*)([P-W_1]) = [P + \gamma^{-1}(P) - 2 W_1],
\]
so the divisor $\calW$ is the union of the graph of the identity of $C_1$ and
the graph of the automorphism $\gamma^{-1}$ of~$C_1$. In particular, $\calW$ is
the union of two irreducible curves (each isomorphic to~$C_1$). For the Richelot
isogenies in Proposition~\ref{P:dihedral}, the divisor $\calW$ was an
irreducible curve. Thus, no isogeny can be produced by both of these 
constructions.
\end{proof}

As we have just proven, these degenerate isogenies do not come from the dihedral
construction described in the preceding section. However, if we generalize the
dihedral construction to allow genus-$4$ ``curves'' $D$ that are \emph{not}
irreducible, then the degenerate isogenies fit into the same framework. Let us
sketch here how this works.

Let $D$ be the disjoint union of two copies of~$C_1$, which we denote by writing
$D = C_1\coprod C_1$. An automorphism of $D$ can either swap the copies of $C_1$
or not. Given two automorphisms $f$ and $g$ of~$C_1$, we will denote by
$\left[\begin{smallmatrix}f&0\\0&g\end{smallmatrix}\right]$ the automorphism
that sends the first component to itself by $f$ and the second component to
itself by~$g$; we will denote by 
$\left[\begin{smallmatrix}0&g\\f&0\end{smallmatrix}\right]$ the automorphism
that sends the first component to the second via $f$ and the second component to
the first via~$g$.

Suppose $\gamma$ is an automorphism of $C_1$ with $\gamma^2 = \iota$. Let 
$\alpha= \left[\begin{smallmatrix}0&1\\1&0\end{smallmatrix}\right]$, let
$\beta = \left[\begin{smallmatrix}0&\gamma\\ \iota\gamma&0\end{smallmatrix}\right]$,
and let $I = \left[\begin{smallmatrix}\iota&0\\0&\iota\end{smallmatrix}\right]$.
Note that $I^*$ acts as $-1$ on the Jacobian $J = J_1\oplus J_1$ of~$D$, so $I$
plays the role of the hyperelliptic involution; it is central in the 
automorphism group of~$D$.

We see that $\alpha$ and $\beta$ are involutions that anti-commute with one
another, in the sense that  $\alpha\beta = I \beta\alpha$, and the subgroup $G$
of $\Aut D$ generated by them is dihedral of order~$8$. As in 
Section~\ref{SS:dihedral}, we get a Galois cover $D \to D/G\cong \PP^1$:
\[
\xymatrix{
&& && C_1\coprod C_1\ar[d]^{\langle I\rangle} && && \\
C_1\ar[drr]\ar@{<-}[urrrr]^{\langle I\alpha\rangle}
&& C_1\ar[d]\ar@{<-}[urr]_{\langle\alpha\rangle}
&& \PP^1 \coprod \PP^1 \ar[dll]\ar[drr]
&& C_1\ar[d]\ar@{<-}[ull]^{\langle\beta\rangle}
&& C_1\ar[dll]\ar@{<-}[ullll]_{\langle I\beta\rangle} \\
&& \PP^1\ar[drr] && \PP^1\coprod \PP^1 \ar@{<-}[u] && \PP^1\ar[dll] && \\
&& && \PP^1\rlap{.}\ar@{<-}[u] && && \\
}
\]
If we let $\Id$ denote the identity map on~$C_1$, then the four maps from 
$C_1\coprod C_1$ to $C_1$ in this diagram are, from left to right, isomorphic to
$(\iota,\iota)$, $(\Id, \Id)$, $(\iota\gamma,\Id)$, and $(\gamma,\iota)$. In the
notation of Section~\ref{SS:dihedral}, we have $\varphi_1 = (\Id,\Id)$ and 
$\varphi_2 = (\iota\gamma,\Id)$.

Now we can compute where the homomorphism 
${\varphi_{2*}\varphi_1^*\colon J_1\to J_1}$ sends the class of a divisor
$P - W_1$. We see that $\varphi_1^*(P)$ consists of two points: $P$ on one copy
of $C_1$ in $C_1\coprod C_1$, and $P$ on the other copy of~$C_1$. Applying 
$\varphi_2$ to this divisor gives $P + \iota\gamma(P)$. Similarly, we find that
$W_1$ gets sent to $2W_1$, so 
\[
\varphi_{2*}\varphi_1^*([P-W_1]) = [P + \iota\gamma(P) - 2 W_1].
\]
But $\gamma^*(P) = \iota\gamma(P)$, so we find that 
$1 + \gamma^* = \varphi_{2*}\varphi_1^*$, and the Richelot isogeny we produced
with the degenerate construction can be viewed as coming from a generalized
version of the dihedral construction.

\subsection{Frobenius and Verschiebung}
\label{SS:FV}
To be complete, we will make a few remarks on the Richelot isogenies given by 
Frobenius and Verschiebung. Let $C_1$ be an arbitrary supersingular genus-$2$ 
curve over~$k$, say given by an equation $y^2 = x^5 + Bx^3$, and let $C_2$ be 
the curve  $y^2 = x^5 + B^2x^3$. Let $(J_1,\lambda_1)$ and $(J_2,\lambda_2)$ be
the polarized Jacobians of these curves. The Frobenius morphism $\varphi$ from 
$C_1$ to $C_2$ takes a point $(x,y)$ to the point $(x^2,y^2)$, and the 
push-forward of this map is an isogeny $F\colon J_1\to J_2$ also called the 
Frobenius. The isogeny $V\colon J_2\to J_1$ given by 
$V= \lambda_1^{-1}\hat{F}\lambda_2$ is called the Verschiebung. We have a 
diagram
\[
\xymatrix{
J_1\ar[rr]^{2\lambda_1}\ar[d]_{F} && \hat{J_1}                 \\
J_2\ar[rr]^{\lambda_2}            && \hat{J_2}\ar[u]_{\hat{F}}
}
\]
that shows that the Frobenius is a Richelot isogeny and that its dual isogeny is
the Verschiebung.

If we apply the construction of the divisor $\calW$ from the proof of 
Proposition~\ref{P:dihedral} to the isogeny $\psi = F$, we find that 
$\psibar(P) = \{\varphi(P), W_2\}$ for every $P\neq W_1$ in~$C_1(k)$, so that 
$\calW\subset C_1\times C_2$ is the union of the graph of $\varphi$ and a 
``horizontal'' copy of~$C_1$.

Similarly, if we construct $\calW\subset C_2\times C_1$ from the Verschiebung
$V\colon J_2\to J_1$, we obtain the transpose of the graph of $\varphi$ together
with a ``horizontal'' copy of~$C_2$.

We see that neither Frobenius nor Verschiebung can be constructed from the
dihedral construction or from the degenerate construction from the preceding
sections.

There is one remaining point to consider: Are the Frobenius and Verschiebung
isogenies starting from a given Jacobian ever isomorphic to one another? The 
following theorem says that the answer is no.

\begin{theorem}
\label{T:FneqV}
Let $C$ be a supersingular genus-$2$ curve over $k$ with supersingular 
invariant~$I$, let $C^{(2)}$ and $C^{(1/2)}$ be the curves with invariants $I^2$
and~$I^{1/2}$, respectively, and let $J$, $J^{(2)}$, and~$J^{(1/2)}$ be the 
Jacobians of these curves. Then Frobenius isogeny $F\colon J\to J^{(2)}$ is 
never isomorphic to the Verschiebung isogeny $v\colon J\to J^{(1/2)}$.
\end{theorem}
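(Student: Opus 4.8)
The plan is to distinguish $F$ from $v$ by means of the correspondence $\calW$ attached to a Richelot isogeny in the proof of Proposition~\ref{P:dihedral}. Recall from that proof that to every isogeny $\psi\colon J_{C_1}\to J_{C_2}$ between Jacobians of genus-$2$ curves one attaches a divisor $\calW(\psi)$ on $C_1\times C_2$, built only from $\psi$ and the Weierstrass points of $C_1$ and $C_2$. The first step is to record that this construction is functorial for isomorphisms of curves: if $g\colon C_1\to C_1'$ and $h\colon C_2\to C_2'$ are isomorphisms with induced isomorphisms $g_*$ and $h_*$ of Jacobians, then $\calW(h_*\circ\psi\circ g_*^{-1})$ is the image of $\calW(\psi)$ under $g\times h$. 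Verifying this against the explicit recipe for $\calW$ uses the fact --- automatic here --- that every automorphism of a supersingular genus-$2$ curve in characteristic $2$ fixes the unique Weierstrass point. Since in genus $2$ every isomorphism of principally polarized Jacobians is induced by an isomorphism of curves (Torelli), it follows that isomorphic Richelot isogenies have attached divisors that differ by a product isomorphism $g\times h$ of the ambient surfaces. In particular, writing $\calW(\psi)^{\circ}$ for the union of the irreducible components of $\calW(\psi)$ that dominate both factors, the bidegree of $\calW(\psi)^{\circ}$ --- the pair of degrees of its two projections --- depends only on the isomorphism class of $\psi$ as a Richelot isogeny.

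The second step is to compute this invariant for $F$ and for $v$, using Section~\ref{SS:FV}. There it is shown that $\calW(F)\subset C\times C^{(2)}$ is the union of the graph $\Gamma_\varphi$ of the Frobenius morphism $\varphi\colon C\to C^{(2)}$ with a horizontal copy of $C$; hence $\calW(F)^{\circ}=\Gamma_\varphi$, which maps isomorphically to the source $C$ and by the degree-$2$ map $\varphi$ to the target $C^{(2)}$. Applying the same computation to the Verschiebung attached to the Frobenius morphism $\varphi'\colon C^{(1/2)}\to C$ --- which is precisely the isogeny $v\colon J\to J^{(1/2)}$ --- shows that $\calW(v)^{\circ}$ is the transpose of the graph of $\varphi'$, which maps by the degree-$2$ map $\varphi'$ to the source $C$ and isomorphically to the target $C^{(1/2)}$. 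Thus $\calW(F)^{\circ}$ has degree $1$ over the source and degree $2$ over the target, whereas $\calW(v)^{\circ}$ has degree $2$ over the source and degree $1$ over the target.

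The conclusion follows. If $F$ and $v$ were isomorphic as Richelot isogenies, then in particular $C^{(2)}$ and $C^{(1/2)}$ would be isomorphic, and the first step would supply a product isomorphism carrying $\calW(F)^{\circ}$ onto $\calW(v)^{\circ}$; but such a map preserves the degrees of both projections, and $(1,2)\neq(2,1)$. Hence $F$ is never isomorphic to $v$.

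The step I expect to require the most care is the invariance claim: one must check the functoriality of the recipe for $\calW$ in detail and confirm that, for $v$, the construction genuinely produces the transposed graph of the Frobenius morphism (as asserted in Section~\ref{SS:FV}) rather than some divisor with the same bidegree as $\calW(F)^{\circ}$. A self-contained alternative, which can serve as a cross-check, avoids correspondences altogether: $\ker F$ is the kernel of the relative Frobenius of $J$, hence is stable under every automorphism of $J$, so $F\cong v$ would force $\ker v=\ker F$; but $\ker v$ is the kernel of the Verschiebung $J\to J^{(1/2)}$, and its coinciding with the Frobenius kernel would force that Verschiebung to annihilate every invariant differential of $J^{(1/2)}$, hence would force $J^{(1/2)}$ to be superspecial --- which never occurs for a genus-$2$ Jacobian in characteristic $2$.
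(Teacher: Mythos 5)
Your proof is correct, but it takes a genuinely different route from the paper's. The paper argues directly: writing $V = b\circ F\circ a$ and invoking Torelli to realize $a$ and $b$ by curve isomorphisms $\alpha$ and $\beta$, it composes with the Frobenius $F'\colon J^{(1/2)}\to J$ to get $2[P-\infty] = (F'\circ V)([P-\infty]) = [(\varphi'\beta\varphi\alpha)(P) + \infty - 2\infty]$, and then derives a contradiction from the uniqueness of the reduced representation $[R+S-2\infty]$ of a nonzero divisor class (the left side is represented by $P+P$ with $P\neq\infty$, the right side by a divisor containing $\infty$). You instead promote the correspondence $\calW$ to an isomorphism invariant via its bidegree and compare $(1,2)$ against $(2,1)$; this costs you the functoriality check for $\calW$ (which does go through, precisely because every isomorphism of these curves fixes the unique Weierstrass point) and a verification of the shape of $\calW(v)$, but it makes the underlying asymmetry --- Frobenius pushes points forward bijectively while Verschiebung is $\varphi'^{*}$ and pulls each point back with multiplicity $2$ --- visible as a discrete invariant, and it packages the argument so that it applies uniformly to any pair of Richelot isogenies with distinct bidegrees. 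Both proofs rest on Torelli and on essentially the same divisor computation. One caveat on the step you rightly flagged: computing $v([\varphi'(R)-\infty]) = \varphi'^{*}([\varphi'(R)-\infty]) = [2R - 2\infty^{(1/2)}]$ shows that $\psibar$ is everywhere a singleton, so by the recipe in Proposition~\ref{P:dihedral} one gets $\calW(v) = 2\cdot{}^{t}\Gamma_{\varphi'}$ rather than the transposed graph plus a horizontal component as stated in Section~\ref{SS:FV}; this discrepancy is harmless for you, since the unique component dominating both factors is the transposed graph in either reading, and its bidegree is $(2,1)$ as you claim. Your alternative kernel-based cross-check is also sound in characteristic $2$: $\ker v = \ker F$ would force the Verschiebung of $J^{(1/2)}$ to kill all invariant differentials, i.e.\ force $J^{(1/2)}$ to be superspecial, whereas Section~\ref{S:review} shows a supersingular genus-$2$ Jacobian in characteristic $2$ is $E\times E/[i\col j](\alphatwo)$ with $[i\col j]\notin\PP^1(\BF_4)$ and hence contains only one copy of $\alphatwo$.
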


\begin{proof}
Suppose, to get a contradiction, that $F$ and $V$ are isomorphic. This means 
that there are isomorphisms $a\colon J\to J$ and $b\colon J^{(2)} \to J^{(1/2)}$
of polarized Jacobians such that $V = b \circ F\circ a$. Let $F'$ be the 
Frobenius morphism from  $J^{(1/2)}$ to~$J$. By definition, $F'V$ is 
multiplication by $2$ on~$J$.

Torelli's theorem tells us that there are isomorphisms $\alpha\colon C\to C$ and
$\beta\colon C^{(2)}\to C^{(1/2)}$ such that $a = \alpha_*$ and $b = \beta_*$. 
Also, if we let $\varphi\colon C\to C^{(2)}$ and $\varphi'\colon C^{(1/2)}\to C$
be the Frobenius morphisms of curves, then $F = \varphi_*$ and 
$F' = \varphi'_*$.

Let $\infty$ be the infinite point on $C$ and let $P$ be any point on~$C$ other
than~$\infty$. Then in the Jacobian of $C$ we have
\begin{align*}
[2P - 2\infty] 
  &= 2[P-\infty] \\
  &= (F'\circ V)([P-\infty])\\
  &= (F'\circ b \circ F\circ a)([P-\infty])\\
  &= (\varphi'_*\circ\beta_*\circ\varphi_*\circ\alpha_*)([P - \infty])\\
  &= [(\varphi'\circ\beta\circ\varphi\circ\alpha)(P) - \infty]\\
  &= [Q - \infty]\\
  &= [Q + \infty - 2\infty]
\end{align*}
where $Q = (\varphi'\circ\beta\circ\varphi\circ\alpha)(P)$. But since
$P\neq \infty$, this equality contradicts the fact (noted above) that every 
nonzero point on $J$ has a unique representation as the class of a divisor of 
the form $[R + S - 2\infty]$. 
\end{proof}

\subsection{Parametrizing dihedral diagrams}
\label{SS:parameters}
The dihedral construction that we presented in Section~\ref{SS:dihedral} begins 
with a supersingular hyperelliptic curve of genus $4$ with two anti-commuting 
involutions, so we must ask: Do any such curves exist? In this section we
present a family of such curves and show that all such curves are members of the
family.

Throughout this section, $k$ will be an algebraically closed field of 
characteristic~$2$. Let $r$ and $s$ be two distinct nonzero elements of~$k$. We
produce a diagram like Diagram~\eqref{EQ:dihedral}, in which $D$ is 
supersingular of genus~$4$ and $D$ is Galois over the bottom~$\PP^1$, as 
follows:

Let $a$ and $c$ be elements of $k$ with $a^{40} = r$ and $c^{40} = s$. (We will
show that the isomorphism class of the diagram we construct does not depend on
the choice of $a$ and~$c$.) Set
\begin{align}
\label{EQ:B1} B_1 &= \frac{a^5 + c^{10} + 1}{a^4 c^2}\\
\label{EQ:B2} B_2 &= \frac{a^{10} + c^5 + 1}{a^2 c^4},
\end{align}
and let $b = c^2$ and $d = a^2$. Finally, let $e$ be a root of the polynomial
\begin{equation}
\label{EQ:additive}
p_{16} T^{16} + p_8 T^8 + p_4 T^4 + p_2 T^2 + p_1 T + p_0,
\end{equation}
where 
\begin{equation}
\label{EQ:pvalues}
\left\{\begin{aligned}
p_{16} &= a^8 c^8\\
p_8    &= (a^{10} + a^5 c^5 + c^{10} + 1)^2 \\
p_4    &= a^6 c^6 (a^5 + 1)^2 (c^5 + 1)^2\\
p_2    &= a^4 c^4 (a^{10} + a^5 c^5 + c^{10} + a^5 + c^5)^2\\
p_1    &= a^8 c^8 (a^5 + c^5)^2\\
p_0    &= a^7 c^7 (a^5 + 1) (c^5 + 1) (a^5 + c^5)^2.
\end{aligned}
\right.
\end{equation}

Let $t$ be a parameter for a copy of~$\PP^1$, and set
\begin{align*}
x &= (c t^2 + a^4 t + c e^2) / (a^5 + c^5)\\
u &= (a t^2 + c^4 t + a e^2) / (a^5 + c^5).
\end{align*}
Note that then we have\footnote{
     See Verification 5.4a in the Magma file.
}
\begin{equation}
\label{EQ:conic}
a^2 x^2 + c^4 x + c^2 u^2 + a^4 u + e^2 = 0.
\end{equation}
Finally, let $z = a^2 x^2 + c^4 x$. By~\eqref{EQ:conic}, we also have
$z = c^2 u^2 + a^4 u + e^2$. This gives us a $V_4$ diagram of copies of~$\PP^1$,
with $t$ being a parameter for the top copy, $x$ a parameter for the left copy,
$u$ a parameter for the right copy, and $z$ a parameter for the bottom copy. We 
give names to some of these covers as in the following diagram:
\begin{equation}
\begin{gathered}
\xymatrix{
&& \PP^1\ar[dll]_{\chi_1}\ar[drr]^{\chi_2} &&\\
\PP^1\ar[drr] && \PP^1\ar@{<-}[u] && \PP^1\ar[dll] \\
&& \PP^1\rlap{.}\ar@{<-}[u] &&\\
}
\end{gathered}
\end{equation}

Now consider the extension $\psi_1\colon C_1\to\PP^1$ of the leftmost $\PP^1$ 
defined by $y^2 + y = x^5 + B_1 x^3$ and the extension 
$\psi_2\colon C_2\to\PP^1$ of the rightmost $\PP^1$ defined by 
$v^2 + v = u^5 + B_2 u^3$. Each of these Artin--Schreier extensions gives rise 
to an Artin--Schreier extension of the top $\PP^1$, and we claim that these two
extensions are in fact isomorphic as Artin--Schreier extensions of~$\PP^1$; to
show this, we need only check that the element
\[
R\colonequals x^5 + B_1 x^3 + u^5 + B_2 u^3
\]
of $k(t)$ can be written $S^2 + S$ for some element $S$ of~$k(t)$.

We produce such an $S$ as follows: Let
\begin{align*}
 s_5 &= \frac{1}{(a^5+c^5)^2}\\[1ex]
 s_4 &= \frac{e}{(a^5+c^5)^2}\\[1ex]
 s_3 &= \frac{a^{10} + a^5 c^5 + a^5 + c^{10} + c^5}{a^2 c^2 (a^5 + c^5)^2}\\[1ex]
 s_2 &= \frac{e^8}{(a^5 + c^5)^2} 
       + \frac{e^4 (a^{10} + a^5 c^5 + c^{10} + 1)}{a^4 c^4 (a^5 + c^5)^2}
       + \frac{e^2 (a^5 + 1) (c^5 + 1)}{a c (a^5 + c^5)^2}\\[1ex]
 s_1 &= \frac{e^4 (a^5 + c^5 + 1)}{(a^5 + c^5)^2}
\end{align*}
and let $s_0$ be an element of $k$ such that 
\[
s_0^2 + s_0 
= \frac{e^{10}}{(a^5 + c^5)^4} 
      + \frac{e^6 (a^{10} + a^5 c^5 + c^{10} + 1)}{a^4 c^4 (a^5 + c^5)^2}.
\]
Let $S = s_5 t^5 + s_4 t^4 + s_3 t^3 + s_2 t^2 + s_1 t + s_0$. With the aid of a
computer algebra system, we check that then $R = S^2 + S$, as desired.\footnote{
     See Verification 5.4b in the Magma file.
} Thus, if we let $\psi\colon D\to\PP^1$ be the double cover of the top $\PP^1$
induced by $\psi_1$ and $\chi_1$, then by identifying $v$ with $y + S$ we can
also take $\psi\colon D\to \PP^1$ to be the double cover of the top $\PP^1$ 
induced by $\psi_2$ and $\chi_2$.

Since $D$ is obtained as the compositum of the two Artin--Schreier covers
$\psi_1$ and $\chi_1$ of the leftmost $\PP^1$, there is also a third curve
intermediate between $D$ and the leftmost $\PP^1$. We call this curve~$C_1'$,
and we let $\varphi_1\colon D\to C_1$ and $\varphi_1'\colon D\to C_1'$ be the
associated double covers. We define $\varphi_2\colon D\to C_2$ and 
$\varphi_2'\colon D\to C_2'$ analogously. At this point, we have the following
diagram:
\begin{equation}
\label{EQ:dihedral-rs}
\begin{gathered}
\xymatrix{
&& && D\ar[d]^{\psi} && && \\
C_1'\ar[drr]_{\psi_1'}\ar@{<-}[urrrr]^{\varphi_1'}
&& C_1\ar[d]^{\psi_1}\ar@{<-}[urr]_{\varphi_1}
&& \PP^1\ar[dll]^{\chi_1}\ar[drr]_{\chi_2}
&& C_2\ar[d]^{\psi_2}\ar@{<-}[ull]^{\varphi_2}
&& C_2'\ar[dll]^{\psi_2'}\ar@{<-}[ullll]_{\varphi_2'} \\
&& \PP^1\ar[drr] && \PP^1\ar@{<-}[u] && \PP^1\ar[dll] && \\
&& && \PP^1\rlap{.}\ar@{<-}[u] && && \\
}
\end{gathered}
\end{equation}

The involution $\alpha$ of $D$ that fixes $C_1$ is given by $t\to t + a^4/c$ and
$y\mapsto y$. The involution $\beta$ of $D$ that fixes $C_2$ is given by 
$t \to t + c^4/a$ and $v\mapsto v$, so that 
\[
\beta(y) = \beta(v + S(t)) = v + S\Bigl(t + \frac{c^4}{a}\Bigr)  = y + S(t) + S\Bigl(t + \frac{c^4}{a}\Bigr).
\]
We then check that
\begin{align*}
\alpha\beta\alpha\beta(y) 
 &= \alpha\beta\alpha\Bigl(y + S(t) + S\Bigl(t + \frac{c^4}{a}\Bigr)\Bigr)\\
 &= \alpha\beta\Bigl(y + S\Bigl(t+ \frac{a^4}{c}\Bigr) + S\Bigl(t + \frac{a^4}{c} + \frac{c^4}{a}\Bigr)\Bigr)\\
 &= \alpha\Bigl(y + S(t) + S\Bigl(t + \frac{c^4}{a}\Bigr) + S\Bigl(t + \frac{a^4}{c} + \frac{c^4}{a}\Bigr) + S\Bigl(t+ \frac{a^4}{c}\Bigr)\Bigr)\\
 &= y + S\Bigl(t+ \frac{a^4}{c}\Bigr) + S\Bigl(t + \frac{a^4}{c} + \frac{c^4}{a}\Bigr) + S\Bigl(t + \frac{c^4}{a}\Bigr) + S(t),
\end{align*}
and this last expression simplifies\footnote{
     See Verification 5.4c in the Magma file.
} to $y + 1$. Since $\alpha\beta\alpha\beta$ is easily seen to fix~$t$, we find
that $\alpha\beta\alpha\beta$ is in fact the hyperelliptic involution $\iota$ of
$D\to\PP^1$. This is enough to show that $\alpha$ and $\beta$ generate a
dihedral group of order~$8$.

Thus, Diagram~\eqref{EQ:dihedral-rs} is an example of 
Diagram~\eqref{EQ:dihedral}.

\begin{lemma}
\label{L:ace}
Up to isomorphism, Diagram~\eqref{EQ:dihedral-rs} is specified completely by the 
two equations
\begin{align*}
y^2 + y &= x^5 + B_1 x^3\\
0 &= a^2 x^2 + c^4 x + c^2 u^2 + a^4 u + e^2,
\end{align*}
where $B_1$ is given by~\eqref{EQ:B1}. 
\end{lemma}

\begin{proof}
This follows from Remark~\ref{R:Dab}, because the hyperelliptic curve $D$ is
specified by the two given equations, while the top $\PP^1$ in the diagram is
given by the conic defined by the equation in $x$ and~$u$, and the two relevant
involutions of the conic are $(x,u)\mapsto(x,u + a^4/c^2)$ and
$(x,u) \mapsto (x + c^4/a^2, u)$.
\end{proof}

\begin{lemma}
The isomorphism class of Diagram~\eqref{EQ:dihedral-rs} depends only on~$r$
and~$s$.
\end{lemma}

\begin{proof}
First we show that for a given choice of $a$ and $c$ with $a^{40} = r$ and
$c^{40} = s$, the isomorphism class of Diagram~\eqref{EQ:dihedral-rs} does not
depend on our choice of the root $e$ of~\eqref{EQ:additive}.

Let $e$ be a root of~\eqref{EQ:additive}. The diagram we obtain is then
specified up to isomorphism by the two equations in Lemma~\ref{L:ace}. Let $r$ 
be a root of the polynomial
\[
g\colonequals T^{16} + B_1^4 T^8 + B_1^2 T^2 + T,
\]
so that by Remark~\ref{R:alternateform} there is a polynomial $f\in k[x]$ such
that $(x,y)\mapsto (x+r,y+f)$ is an automorphism of the curve
$y^2 + y = x^5 + B_1 x^3$. Let $\eps\colonequals a r + c^2 \sqrt{r}$. Then the
map $(x,u,y)\mapsto (x+r,u,y+f)$ gives an isomorphism from the curve specified
by the equations in Lemma~\ref{L:ace} to the curve specified by the same
equations, but with $e$ replaced with $e +\eps$. Furthermore, this isomorphism 
identifies the two involutions $x\mapsto x + c^4/a^2$ and $u\mapsto u + a^4/c^2$
of the conic involving $e$ to the same involutions of the conic involving
$e + \eps$, so the diagram involving $e$ is isomorphic to the one involving 
$e + \eps$.

We check\footnote{
     See Verification 5.6a in the Magma file.
} that $\eps$ is a root of the polynomial 
\[
p_{16} T^{16} + p_8 T^8 + p_4 T^4 + p_2 T^2 + p_1 T
\]
obtained from \eqref{EQ:additive} by removing the constant term. It follows that
$e + \eps$ is a root of~\eqref{EQ:additive}. If we can show that every root 
of~\eqref{EQ:additive} can be obtained from a root of $g$ in this way, then we
will have shown that the isomorphism class of Diagram~\eqref{EQ:dihedral-rs}
does not depend on our choice of the root~$e$.

The $16$ roots of $g$ form an additive group that acts on the roots 
of~\eqref{EQ:additive} via $(r,e)\mapsto e + a r + c^2 \sqrt{r}.$ To show that 
the action is transitive we need only show that no nonzero root $r$ of $g$ 
satisfies $a r + c^2\sqrt{r} = 0$; that is, we need only show that $c^4/a^2$ is
not a root of~$g$. By direct calculation\footnote{
     See Verification 5.6b in the Magma file.
} we find that $g(c^4/a^2) = c^4 (a^5 + c^5)^4/a^{32}$, which is nonzero. 
Therefore, up to isomorphism, Diagram~\eqref{EQ:dihedral-rs} does not depend on
our choice of the root~$e$.

Now we move on to showing that the isomorphism class of the diagram
does not depend on our choice of the elements $a$ and $c$ with $a^{40} = r$
and $c^{40} = s$. Let $\zeta_1$ and $\zeta_2$
be two fifth roots of unity, and let us repeat the construction with
$a$ and $c$ replaced with $a'\colonequals a\zeta_1$ and
$c' \colonequals c\zeta_2$, respectively.

Going through the construction with these new values of $a$ and $c$, we find 
that the new values of several other constants are related to the old ones as
follows:
\begin{align*}
B_1'    &= B_1\zeta_1\zeta_2^3       & 
p_{16}' &= p_{16}\zeta_1^3\zeta_2^3  &  
p_4'    &= p_4 \zeta_1\zeta_2        & 
p_1'    &= p_1\zeta_1^3 \zeta_2^3    \\
B_2'    &= B_2\zeta_1^3\zeta_2       &
p_8'    &= p_8                       &
p_2'    &= p_2\zeta_1^4\zeta_2^4     &
p_0'    &= p_0 \zeta_1^2\zeta_2^2.
\end{align*}
Then one possible choice for $e'$ would be $e' \colonequals e\zeta_1^4\zeta_2^4$,
and since the choice of a root of~\eqref{EQ:additive} does not affect the
isomorphic class of the diagram, we may take this $e'$ as our choice.

Let us take the parameter $t'$ in the new construction to be 
$t\zeta_1^4\zeta_2^4$. Then we have $x' = x \zeta_1^3\zeta_2^4$ and
$u' = \zeta_1^4\zeta_2^3 u$, so that $(x')^5 + B_1'(x')^3 = x^5 + B_1 x^3$ and 
$(u')^5 + B_2'(u')^3 = u^5 + B_2 u^3$. Thus, by setting $y' = y$ and $v'=v$, we
obtain an isomorphism from the diagram obtained from $a$ and $c$ to the one
obtained from $a'$ and $c'$.
\end{proof}

\begin{lemma}
The invariants of the curves $C_1$ and $C_2$ in Diagram~\eqref{EQ:dihedral-rs}
are given by
\begin{align*}
I_1 &= \frac{(r + s^2 + 1)^5 }{r^4 s^2}\\
I_2 &= \frac{(r^2 + s + 1)^5 }{r^2 s^4}.
\end{align*}
\end{lemma}

\begin{proof}
This follows by raising both sides of~\eqref{EQ:B1} and~\eqref{EQ:B2} to the
$40$th power.
\end{proof}

We arrive at the main results of this section.

\begin{theorem}
\label{T:DiagramGeneral}
Let $C_1$ and $C_2$ be supersingular genus-$2$ curves with invariants $I_1$ 
and~$I_2$, respectively. Up to isomorphism, diagrams in the form of 
Diagram~\textup{\eqref{EQ:dihedral}} containing this $C_1$ and $C_2$ are in 
bijection with pairs $(r,s)$ of distinct nonzero elements of $k$ satisfying
\begin{align*}
(r + s^2 + 1)^5 &= r^4 s^2 I_1\\
(r^2 + s + 1)^5 &= r^2 s^4 I_2.
\end{align*}
\end{theorem}

\begin{proof}
Given a diagram of the form~\eqref{EQ:dihedral} containing the given $C_1$ 
and~$C_2$, we will show how to determine a unique pair $(r,s)$ that gives rise
to it via the construction described in this section.

Choose elements $B_1$ and $B_2$ of $k$ so that $B_1^{40} = I_1$ and 
$B_2^{40} = I_2$, and choose coordinates $x$ and $u$ for the leftmost and 
rightmost $\PP^1$s in the diagram, respectively, so that the extensions 
$C_1\to\PP^1$ and $C_2\to\PP^1$ are given by $y^2 + y = x^5 + B_1x^3$ and 
$v^2 + v = u^5 + B_2u^3$, respectively. 

We begin by focusing on a subdiagram, looking especially at the relationships
among the copies of $\PP^1$:
\begin{equation}
\label{EQ:V4bottomwithD}
\begin{gathered}
\xymatrix{
&& D\ar[d] &&\\
C_1\ar[d]\ar@{<-}[urr]&& \PP^1\ar[dll]\ar[drr] && C_2\ar[d]\ar@{<-}[ull]\\
\PP^1\ar[drr] && \PP^1\ar@{<-}[u] && \PP^1\ar[dll] \\
&& \PP^1\rlap{.}\ar@{<-}[u] &&\\
}
\end{gathered}
\end{equation}
Since the hyperelliptic curve $D$ in Diagram~\eqref{EQ:dihedral} is 
supersingular, the double cover $D\to\PP^1$ of the top $\PP^1$ is ramified at
only one point. The point $\infty$ in the left $\PP^1$ ramifies in 
$C_1\to\PP^1$, so if two points of the top $\PP^1$ mapped to $\infty$ in the
left~$\PP^1$, both of those points would ramify in the double cover $D\to\PP^1$,
a contradiction. Therefore $\infty$ ramifies in the top left cover 
$\PP^1\to\PP^1$, and similarly in the top right cover $\PP^1\to\PP^1$. It 
follows that $\infty$ in the left $\PP^1$ and $\infty$ in the right $\PP^1$ map
to the same point in the bottom~$\PP^1$, and this point in the bottom $\PP^1$ 
ramifies going up to the left and going up to the right. In fact, this point is
the only point of the bottom $\PP^1$ that ramifies going up to $D$, and it
ramifies totally.

We can choose a coordinate $z$ on the bottom $\PP^1$ so that this common image
point is~$\infty$. Then the bottom left map $\PP^1\to\PP^1$ is of the form 
$z = a^2 x^2 + b^2 x + e_1^2$ and the bottom right map $\PP^1\to\PP^1$ is 
$z = c^2 u^2 + d^2 u + e_2^2$. (We take the coefficients to be squares to avoid
having square roots appear in later formul\ae.) Setting $e = e_1 + e_2$, we find
that the $V_4$ diagram of copies of $\PP^1$ is specified by the relation
\begin{equation}
\label{EQ:conic2}
a^2 x^2 + b^2 x + c^2 u^2 + d^2 u + e^2 = 0,
\end{equation}
for some $a,b,c,d,e \in k$. We note that $a$, $b$, $c$, and~$d$ must all be 
nonzero in order for the bottom left and bottom right covers $\PP^1\to\PP^1$ to 
be separable maps of degree~$2$. The conic defined by this equation is the 
$\PP^1$ that appears in the top of the $V_4$ diagram, so the conic must be 
nonsingular, which means simply that $a d^2 + b^2 c \ne 0$.

The conic defined by~\eqref{EQ:conic2} does not change if we multiply all of the
coefficients by a nonzero scalar, and there is a unique scaling factor that will
result in the equality $d = a^2$. We scale the coefficients so that this
equality holds.

If we take $t = ax + cu$, then $t$ generates the function field of the
top~$\PP^1$, and we have\footnote{
     See Verification 5.8a in the Magma file.
}
\begin{align}
\label{EQ:x2} x &= (c t^2 + d^2 t + c e^2) / (a d^2 + b^2 c)\\
\label{EQ:u2} u &= (a t^2 + b^2 t + a e^2) / (a d^2 + b^2 c).
\end{align}

Recall that Diagram~\eqref{EQ:V4left} shows a $V_4$ extension extracted from 
Diagram~\eqref{EQ:dihedral}. The middle extension $C_1\to \PP^1$ in 
Diagram~\eqref{EQ:V4left} is the Artin--Schreier extension 
\[
y^2 + y = x^5 + B_1 x^3.
\]
The extension $\PP^1\to\PP^1$ on the right is given by~\eqref{EQ:conic2}, which
we can scale and rewrite as an Artin--Schreier equation:
\[
\left(\frac{c^2}{d^2} u\right)^2 + \left(\frac{c^2}{d^2} u\right)
  = \frac{a^2c^2}{d^4} x^2 + \frac{b^2c^2}{d^4} x + \frac{e^2c^2}{d^4}.
\]
Therefore the double cover $C_1'\to\PP^1$ is given by
\[
\left(y + \frac{c^2}{d^2} u\right)^2 + \left(y + \frac{c^2}{d^2} u\right)
  =  x^5 + B_1 x^3 + \frac{a^2c^2}{d^4} x^2 + \frac{b^2c^2}{d^4} x 
         + \frac{e^2c^2}{d^4}.
\]

Consider the involution $\beta$ of~$D$, which we know fixes $C_2$ and takes 
$C_1$ to~$C_1'$. On the $V_4$ diagram of copies of~$\PP^1$ at the bottom 
of~\eqref{EQ:V4bottomwithD}, the involution $\beta$ must then act trivially on
the rightmost $\PP^1$ and nontrivially on the leftmost~$\PP^1$. Since $\beta$
fixes $C_2$ we must have $\beta^* u = u$ and $\beta^* v = v$. Since $\beta$ acts
nontrivially on the leftmost $\PP^1$ but fixes the bottom~$\PP^1$, we must have
$\beta^* x = x + b^2/a^2$. And since $\beta$ takes $C_1$ to~$C_1'$, we must have
\[
\beta^* y = y +  \frac{c^2}{d^2} u + F
\]
for some $F\in k(x)$. Applying $\beta^*$ to $y^2 + y = x^5 + B_1 x^3$ we find
that
\[
\left(y +  \frac{c^2}{d^2} u + F\right)^2 
  + \left(y +  \frac{c^2}{d^2} u + F\right)
  = \left(x + \frac{b^2}{a^2}\right)^5 + B_1 \left(x + \frac{b^2}{a^2}\right)^3,
\]
which simplifies\footnote{
     See Verification 5.8b in the Magma file for this statement and the 
     following one.
} to
\begin{multline}
F^2 + F = 
  \frac{b^2}{a^2} x^4 
  + \left( \frac{B_1 b^2}{a^2} + \frac{a^2c^2}{d^4}\right) x^2 \\
  + \left( \frac{b^8}{a^8} + \frac{B_1 b^4}{a^4} + \frac{b^2c^2}{d^4}\right) x 
  + \left( \frac{b^{10}}{a^{10}} + \frac{B_1 b^6}{a^6} + \frac{e^2c^2}{d^4}\right).
\end{multline}
Therefore $F = f_2 x^2 + f_1 x + f_0$ for some $f_2, f_1, f_0\in k$. Looking at
the coefficient of $x^4$ in the preceding equality, we find that $f_2 = b/a$. 
Looking at the coefficient of~$x$, we find
\[
f_1 =   \frac{b^8}{a^8} + \frac{B_1 b^4}{a^4} + \frac{b^2c^2}{d^4}.
\]
We also have the condition that $\beta$ is an involution, which implies that 
\[
f_2 x^2 + f_1 x + f_0 
  + f_2 \left(x+\frac{b^2}{a^2}\right)^2 
  + f_1 \left(x+\frac{b^2}{a^2}\right) 
  + f_0 = 0;
\]
this simplifies to $a^2 f_1 + b^2 f_2 = 0$. Combining these three relations, we
find that
\begin{equation}
\label{EQ:oldrel1a}
0 = B_1 a^4 b^2 d^4 + a^8 c^2 + a^5 b d^4 + b^6 d^4.
\end{equation}

The same argument, applied to the involution~$\alpha$, shows that we must have
\begin{equation}
\label{EQ:oldrel2a}
0 = B_2 b^4 c^4 d^2 + a^2 c^8 + b^4 c^5 d + b^4 d^6.
\end{equation}

Next we use the fact that the extension $D\to\PP^1$ of the top $\PP^1$ in
Diagram~\eqref{EQ:V4bottomwithD} is obtained both from $C_1 \to \PP^1$ and from 
$C_2\to\PP^1$. This simply means that the Artin--Schreier extension of $k(t)$ we
get from~\eqref{EQ:x2} and $y^2 + y = x^5 + B_1 x^3$ is isomorphic to the
Artin--Schreier extension we get from~\eqref{EQ:u2} and 
$v^2 + v = u^5 + B_2 u^3$. This implies that the expression 
\[ 
R = x^5 + B_1 x^3 +  u^5 + B_2 u^3, 
\]
viewed as a polynomial in~$t$, can be written $S^2 + S$ for some $S\in k[t]$. 
If we write
\[ 
R = s_{10} t^{10} + s_9 t^9 + s_8 t^8 + \cdots + s_1 t + s_0, 
\]
then the equality $R = S^2 + S$ implies (among other things) that
\begin{align}
\label{EQ:rel9} 0 &= s_9\\
\notag          0 &= s_8 + s_4^2 + s_2^4 + s_1^8.
\end{align}

With a computer algebra system it is easy to check\footnote{
     See Verification 5.8c in the Magma file.
} that 
\[ 
s_9 = \frac{(a^2 b + c^2 d)^2}{(a d^2 + b^2 c)^5}, 
\]
so~\eqref{EQ:rel9} is equivalent to  $a^2 b = c^2 d$. Since we have scaled our
coefficients so that $d = a^2$, it follows that we also have $b = c^2$. Now we 
use these equalities to eliminate $b$ and $d$ from our equations. First we note
that the nonsingularity of the conic defined by~\eqref{EQ:conic2} implies that
$a^5 \ne c^5$. Second, using the fact that $a$ and $c$ are nonzero, we find
that~\eqref{EQ:oldrel1a} becomes~\eqref{EQ:B1} and~\eqref{EQ:oldrel2a} 
becomes~\eqref{EQ:B2}.

Using our new scaled variables we find\footnote{
     See Verification 5.8d in the Magma file.
} that
\[ 
s_8 + s_4^2 + s_2^4 + s_1^8 
  = \frac{(p_{16}e^{16} + p_8 e^8 + p_4 e^4 + p_2 e^2 + p_1 e + p_0)^2}
         {a^{16} c^{16} (a^5 + c^5)^8},
\]
where the coefficients $p_i$ are as in~\eqref{EQ:pvalues}. Therefore $e$ is a 
root of~\eqref{EQ:additive}.

Let $r = a^{40}$ and $s = c^{40}$. Then $a$, $c$, $e$, $B_1$, and $B_2$ are
exactly as in the construction presented at the beginning of the section, 
applied to $r$ and~$s$. 

We see that every diagram in the form of Diagram~\eqref{EQ:dihedral} that
contains our given $C_1$ and $C_2$ comes from an $(r,s)$ pair with $r$ and $s$
nonzero and distinct. Furthermore, since $r$ and $s$ can be recovered from the
diagram, distinct pairs give rise to distinct diagrams. The theorem follows.
\end{proof}

The preceding theorem is phrased in terms of the invariants $I_1$ and $I_2$ of
the curves $C_1$ and $C_2$. There is an equivalent statement that involves
specifying models for $C_1$ and~$C_2$.

\begin{corollary}
\label{C:DiagramGeneral}
Let $C_1$ and $C_2$ be supersingular genus-$2$ curves given by equations
$y^2 + y = x^5 + B_1 x^3$ and $y^2 + y = x^5 + B_2 x^3$. Up to isomorphism, 
diagrams in the form of Diagram~\textup{\eqref{EQ:dihedral}} containing this 
$C_1$ and $C_2$ are in bijection with pairs $(a,c)$ of nonzero elements of $k$
such that $a^5\ne c^5$ and
\begin{equation}
\label{EQ:Brelations}
\left\{\ 
\begin{aligned}
a^5 + c^{10} + 1 &= a^4 c^2 B_1\\
a^{10} + c^5 + 1 &= a^2 c^4 B_2.
\end{aligned}
\right.
\end{equation}
\end{corollary}

\begin{proof}
The first step of the proof of Theorem~\ref{T:DiagramGeneral} was to choose 
$B_1$ and $B_2$ in $k$ with $B_1^{40} = I_1$ and $B_2^{40} = I_2$, so that
$y^2 + y = x^5 + B_i x^3$ is a model for the curve with invariant~$I_i$. Then
the proof showed how dihedral diagrams containing the given curves are in 
bijection with pairs $(a,c)$ with $a^5\ne c^5$ satisfying~\eqref{EQ:Brelations}.
\end{proof}

In the following section we will use Theorem~\ref{T:DiagramGeneral} and its
corollary to count Richelot isogenies between supersingular curves. At this
point, however, we are already able to see an easy consequence of the theorem.

\begin{corollary}
\label{T:DiagramTwoSpecial}
There are no diagrams in the form of Diagram~\textup{\eqref{EQ:dihedral}} when
$C_1$ and $C_2$ both have supersingular invariant~$0$.
\end{corollary}

\begin{proof}
Theorem~\ref{T:DiagramGeneral} says that such diagrams correspond to pairs
$(r,s)$ of distinct nonzero elements of $k$ with $r = s^2 + 1$ and 
$s = r^2 + 1$. Suppose we have such a pair. Combining the two equalities we find
that $r^4 + r = 0$, and since $r\ne 0$ we must have $r^3 = 1$. We cannot have
$r=1$ because then $s = 0$, so $r$ must be a primitive cube root of unity. But
then from $s = r^2 + 1$ we see that $s = r$, which contradicts the requirement
that $r$ and $s$ be distinct.
\end{proof}

\subsection{Counting isogenies}
\label{SS:counting}
Now we are in a position to show that the dihedral construction, the degenerate 
construction, the Frobenius,  and the Verschiebung account for all of the
Richelot isogenies between two supersingular curves. First we look at the case 
of two curves whose supersingular invariants are nonzero, so that according to
Theorem~\ref{T:main} there are $60$ isogenies to account for.

\begin{theorem}
Let $C_1$ and $C_2$ be supersingular genus-$2$ curves over $k$ with 
supersingular invariants $I_1$ and $I_2$, respectively, and suppose $I_1$ and
$I_2$ are nonzero. Then the number of isomorphism classes of Richelot isogenies
from $C_1$ to $C_2$ coming from the dihedral construction, from the degenerate
construction, from Frobenius, and from Verschiebung are as given in 
Table~\textup{\ref{Table:Count}}, and these account for all of the Richelot 
isogenies from $C_1$ to~$C_2$.
\end{theorem}

\begin{table}[ht]
\caption{
The number of isomorphism classes of Richelot isogenies from a supersingular 
genus-$2$ curve with invariant $I_1\ne 0$ to a supersingular genus-$2$ curve 
with invariant $I_2\ne 0$ coming from the dihedral construction, from the 
degenerate construction, from Frobenius, and from Verschiebung, depending on
whether $I_1=I_2$, $I_1=I_2^2$, and $I_2 = I_1^2$. The fourth row gives the case
where $I_1$ and $I_2$ are distinct primitive cube roots of unity, and the sixth
row gives the case when $I_1 = I_2 = 1$.
}
\begin{center}
\begin{tabular}{ccc|cccc}
\toprule
\multicolumn{3}{c|}{Conditions on $I_1$ and $I_2$:} &
\multicolumn{4}{c}{Number of isogenies coming from$\ldots$}\\
$I_1=I_2$\textup{?} & $I_1 = I_2^2$\textup{?} & $I_2 = I_1^2$\textup{?} & 
Dihedral & Degen. & Frob. & Ver.\\
\midrule
 no &  no &  no & 60 & \pz0 &  0 &  0 \\
 no &  no & yes & 59 & \pz0 &  1 &  0 \\
 no & yes &  no & 59 & \pz0 &  0 &  1 \\
 no & yes & yes & 58 & \pz0 &  1 &  1 \\
yes &  no &  no & 50 &   10 &  0 &  0 \\
yes & yes & yes & 48 &   10 &  1 &  1 \\
\bottomrule
\end{tabular}
\end{center}
\label{Table:Count}
\end{table}

\begin{proof}
First let us consider the number of isogenies coming from the dihedral
construction. Pick values of $B_1$ and $B_2$ in $k$ so that $B_1^{40} = I_1$ and
$B_2^{40} = I_2$. From Proposition~\ref{P:dihedral} and 
Corollary~\ref{C:DiagramGeneral} we know that the number of isomorphism classes
of Richelot isogenies from $C_1$ to $C_2$ is given by the number of pairs 
$(a,c)$ of nonzero elements of $k$ with $a^5 \ne c^5$ that 
satisfy~\eqref{EQ:Brelations}.

Note that  the conditions that $I_1=I_2$ or $I_1 = I_2^2$ or $I_2 = I_1^2$ 
become the conditions that $B_1^5 = B_2^5$ or $B_1^5 = B_2^{10}$ or 
$B_2^5 = B_1^{10}$, respectively.

Let us view $a$, $c$, $B_1$, and $B_2$ as indeterminates, and let $f_1$, $f_2$,
and $g$ be the polynomials in $\BF_2[a,c,B_1,B_2]$ given by
\begin{align*}
f_1 &= B_1 a^4 c^2 + a^5 + c^{10} + 1\\
f_2 &= B_2 a^2 c^4 + a^{10} + c^5 + 1\\
  g &= B_1^2 a^8 + B_2 a^2 + c^{16} + c.
\end{align*}
Note that $c^4 g = f_1^2 + f_2$, so every set of nonzero values of $a$, $c$,
$B_1$, and $B_2$ that satisfies $f_1$ and $f_2$ also satisfies~$g$, and every
set of nonzero values that satisfies $f_1$ and $g$ also satisfies~$f_2$.

Given two elements $h_1$, $h_2$ in $\BF_2[a,c,B_1,B_2]$ and a variable 
$v\in \{a,c,B_1,B_2\}$, we let $\Res_v(h_1,h_2)$ denote the resultant of $h_1$ 
and $h_2$ with respect to the variable~$v$ and we let $\Disc_v(h_1)$ denote the 
discriminant of $h_1$ with respect to the variable~$v$.  We compute\footnote{
     See Verification 5.11a in the Magma file.
} that
\begin{equation}
\label{EQ:a}
\Res_c(f_1,g_1)  = a^{20} R,
\end{equation}
where
\begin{equation}
\label{EQ:r}
\begin{aligned}
 R &= a^{60} + a^{45} + a^{40} + B_1 B_2^2 a^{33} + B_1 B_2^2 a^{28} + a^{25} + B_1^2 B_2^4 a^{21}\\
   &\qquad    + a^{20} + B_1 B_2^2 a^{18} + B_1^2 B_2^4 a^{16} + B_1^4 B_2^8 a^{12} + B_1^3 B_2^6 a^9 + B_1 B_2^2 a^8 \\
   &\qquad    + B_1^2 B_2^4 a^6 + a^5 + B_1^3 B_2^6 a^4 + B_1 B_2^2 a^3 + B_1^2 B_2^4 a + (B_1^5 + B_2^{10}).
\end{aligned}
\end{equation}
We also check that 
\begin{equation}
\label{EQ:disca}
\Disc_a R = (B_1^5 + B_2^{10})^{44}.
\end{equation}

Suppose now we are given specific nonzero values of $B_1$ and $B_2$ in~$k$ with
$B_1^5 \ne B_2^{10}$ and $B_2^5\neq B_1^{10}$ and $B_1^5\neq B_2^5$. 
From~\eqref{EQ:a} and~\eqref{EQ:disca} we find that there are exactly $60$
nonzero values of $a\in k$ for which there exists a $c\in k$ with 
$f_1(a,c) = g(a,c) = 0$. We claim that each of these values of $c$ is nonzero.
To see this, note that if $c$ were $0$ we would have 
$0 = f_1(a,0) = a^5 + 1$ and $0 = g(a,0) = B_1^2 a^8 + B_2 a^2$; combining these
equalities, we find that $B_1^{10} = B_2^5$, a contradiction. Thus there are 
exactly $60$ pairs $(a,c)$ of nonzero elements of $k$ that satisfy
$f_1(a,c) = f_2(a,c) = 0$.  We must still consider whether any of these pairs
fails to satisfy the requirement that $a^5\ne c^5$. But if $a^5 = c^5$ and 
$f_1(a,c) = f_2(a,c) = 0$, then we find that $B_1^5 = B_2^5$, a contradiction.
This gives us the count of dihedral Richelot isogenies on the first line of 
Table~\ref{Table:Count}.

Now consider the case when $I_1 \neq I_2^2$ and $I_2 \neq I_1^2$ but 
$I_1 = I_2$. In this case we choose our values $B_1$ and $B_2$ to be equal, and
we repeat the computations we above, but this time we work in the polynomial
ring $\BF_2[a,c,B]$ and set $B_1 = B_2 = B$. (Note that the assumptions that
$I_1 \neq I_2^2$ and $I_2 \neq I_1^2$  imply that $I_1\ne 1$, so $B^5\neq 1$.)
The same argument as before shows that there are $60$ pairs $(a,c)$ of nonzero
elements of $k$ that satisfy $f_1$ and $f_2$, but now for some number of these
pairs we have $a^5 = c^5$. We count these unacceptable pairs as follows.

Suppose we have a solution pair $(a,c)$ with $a^5 = c^5$. Write $c = a\zeta$,
where $\zeta^5=1$. Since $f_1(a,c) = f_2(a,c) = 0$ and $B_1 = B_2$, we find that
$\zeta^2 = \zeta^4$, so $\zeta = 1$. Thus, the bad $(a,c)$ pairs correspond to 
the values of $a$ such that $f_1(a,a) = 0$; that is, the roots of 
$T^{10} + B T^6 + T^5 + 1$. This polynomial has discriminant~$1$, so it has $10$
distinct roots, all nonzero.\footnote{
     See Verification 5.11b in the Magma file.
} This shows that of the $60$ pairs $(a,c)$ of nonzero elements satisfying $f_1$
and~$f_2$, exactly $10$ fail to meet the condition that $a^5 \neq c^5$. This
gives us the count of dihedral Richelot isogenies on the fifth line of 
Table~\ref{Table:Count}.

Now we consider the case where $I_2 = I_1^2$, but $I_1\neq I_2$ and 
$I_1\neq I_2^2$. We choose our values $B_1$ and $B_2$ so that $B_2 = B_1^2$.
Again we find $60$ distinct nonzero values of $a$ for which there exists a
$c\in k$ with $f_1(a,c) = g(a,c) = 0$. However, we check that now there
\emph{is} a (unique) nonzero value of $a$ satisfying $f_1(a,0) = g(a,0) = 0$,
namely $a = 1$. This leaves us with $59$ pairs $(a,c)$ of nonzero elements 
satisfying $f_1(a,c) = f_2(a,c) = 0$, and once again the assumption that
$B_1^5 \ne B_2^5$ shows that $a^5\neq c^5$ for each of these pairs. This gives
us the count of dihedral Richelot isogenies on the second line of 
Table~\ref{Table:Count}, and the entry on the third line follows by symmetry.

We turn to the case where $I_1 = I_2^2$ and $I_2 = I_1^2$, but where 
$I_1 \neq I_2$. This means that $I_1$ and $I_2$ are distinct primitive cube
roots of unity, and we can choose $B_1$ and $B_2$ so that they too are distinct
primitive cube roots of unity. We can let $a$ run through the nonzero roots of
the polynomial $R$ from~\eqref{EQ:r} and let $c$ run through the nonzero roots
of the corresponding polynomial with the roles of $B_1$ and $B_2$ reversed, and
check to see which pairs $(a,c)$ satisfy $f_1(a,c) = f_2(a,c) = 0$ and
$a^5 \neq c^5$. We find exactly $58$ pairs\footnote{
     See Verification 5.11c in the Magma file.
}, and this gives us the count of dihedral Richelot isogenies on the fourth 
line of Table~\ref{Table:Count}.

Finally, we look at the case where $I_1 = I_2 = 1$. As in the preceding case, we
can explicitly calculate the pairs $(a,c)$ of nonzero elements of $k$ such that
$f_1(a,c) = f_2(a,c) = 0$ and $a^5 \neq c^5$, and we find there are $48$ such
pairs.\footnote{
     See Verification 5.11d in the Magma file.
} This gives us the count of dihedral Richelot isogenies on the last line of 
Table~\ref{Table:Count}.

We turn now to the question of counting degenerate Richelot isogenies. 
Degenerate isogenies require that $C_1\cong C_2$; that is, that $I_1 = I_2$.
This explains the four zero entries in the ``Degenerate'' column of 
Table~\ref{Table:Count}.

Suppose $I_1 = I_2$, and let $C$ be the curve $C_1\cong C_2$. We see from 
Proposition~\ref{P:degenerate} that the number of degenerate Richelot isogenies
is equal to the number of conjugacy classes of elements $\gamma\in \Aut C$ with
$\gamma^2 = \iota$, the hyperelliptic involution. Statement~2(c) of 
Lemma~\ref{L:models} says that there are exactly $10$ such conjugacy classes, 
and this gives us the count of degenerate Richelot isogenies on the last two 
lines of the table.

The entries for Frobenius and Verschiebung are self-explanatory.

We see that for each pair $(I_1,I_2)$ we can account for $60$ distinct Richelot 
isogenies, the number given by Theorem~\ref{T:main}.
\end{proof}

\begin{theorem}
Let $C_1$ and~$C_2$ be supersingular genus-$2$ curves over $k$ with 
supersingular invariants $I_1\neq 0$ and $I_2 = 0$, respectively. Then up to 
isomorphism, there are $12$ Richelot isogenies from $C_1$ to $C_2$ coming from 
the dihedral construction, and these account for all of the Richelot isogenies 
from $C_1$ to~$C_2$.
\end{theorem}

\begin{proof}
From Proposition~\ref{P:dihedral} and Theorem~\ref{T:DiagramGeneral} we know
that the number of isomorphism classes of Richelot isogenies from $C_1$ to $C_2$
coming from the dihedral construction is equal to the number of pairs $(r,s)$ of
distinct nonzero element of $k$ such that $r^2 + s + 1 = 0$ and 
$(r + s^2 + 1)^5 = r^4 s^2 I_1$. Plugging $s = r^2 + 1$ into the second
equality, we find that $r^4 (r+1)^4 f = 0$, where 
\[
f \colonequals r^{12} + r^9 + r^8 + r^5 + r^4 + r + I_1 = r(r+1)(r^2 + r + 1)^5 + I_1.
\]
Since we only want to count $(r,s)$ pairs where $r$ and $s$ are nonzero, we can 
ignore the solutions where $r = 0$ or $r = 1$, and look only at the roots 
of~$f$. The discriminant of $f$ is $I_1^8\neq 0$, and the constant term of $f$
is also nonzero, so we find\footnote{
     See Verification 5.12 in the Magma file.
} there are $12$ possible nonzero values of $r$ --- and none of them is equal 
to~$1$, so the corresponding values of $s$ are nonzero. The last thing we must
check is that for the roots $r$ of $f$, we do not have $r = r^2 + 1$, and this 
is clear from the rightmost expression for~$f$.

Thus we have $12$ pairs $(r,s)$ that give rise to distinct dihedral Richelot
isogenies from  $C_1$ to  $C_2$, and by Theorem~\ref{T:count} this accounts for 
all of the Richelot isogenies from $C_1$ to~$C_2$.
\end{proof}

\begin{theorem}
Let $C$ be the supersingular genus-$2$ curve over $k$ with supersingular 
invariant~$0$. There are exactly $2$ isomorphism classes of Richelot isogeny
from $C$ to itself coming from the degenerate construction, and these two 
degenerate isogenies, together with the Frobenius and the Verschiebung,
represent all of the isomorphism classes of Richelot isogenies from  $C$ to
itself.
\end{theorem}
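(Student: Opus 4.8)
The plan is to reduce the statement to counting the contribution of each construction and then to match the total against the count of four provided by Theorem~\ref{T:main}. Since the special curve $C$ has supersingular invariant $I = 0$, the curves $C^{(2)}$ and $C^{(1/2)}$ of Theorem~\ref{T:FneqV} have invariants $I^2 = 0$ and $I^{1/2} = 0$ and hence are both isomorphic to $C$ by statement~3(a) of Lemma~\ref{L:models}. Thus the Frobenius $F$ and the Verschiebung $V$ are genuinely Richelot isogenies from $J$ (the Jacobian of $C$) to itself; by Theorem~\ref{T:FneqV} they are not isomorphic to one another, and by the remarks at the end of Section~\ref{SS:FV} neither of them arises from the dihedral or the degenerate construction. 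Moreover, Theorem~\ref{T:DiagramTwoSpecial} shows that there is no diagram of the shape of Diagram~\eqref{EQ:dihedral} with both curves equal to the special curve, so \emph{no} Richelot isogeny from $J$ to itself comes from the dihedral construction. It then remains only to count the degenerate isogenies and to verify the total.

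Next I would count the degenerate Richelot isogenies. By Proposition~\ref{P:degenerate} these are in bijection with the conjugacy classes of automorphisms $\gamma$ of $C$ with $\gamma^2 = \iota$. Such a $\gamma$ satisfies $\gamma^4 = 1$ and $\gamma^2 \neq 1$, so it has order exactly $4$. Statement~3(c) of Lemma~\ref{L:models} identifies the order-$4$ automorphisms of $C$: the ones with $\zeta \neq 1$ have order divisible by $5$ and so cannot have order $4$, so every order-$4$ automorphism has $\zeta = 1$ and is of the form $(x,y)\mapsto(x+t,y+f(x))$ with $t$ a root of $T^{10} + T^5 + 1$, and all of these do have square $\iota$. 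The same statement says two such automorphisms are conjugate if and only if the corresponding values of $t$ have the same fifth power. Setting $s = t^5$, the equation $T^{10} + T^5 + 1 = 0$ becomes $s^2 + s + 1 = 0$, which has two distinct nonzero roots in $k$; as $k$ is algebraically closed of characteristic $2$, each of these roots $s$ has exactly five fifth roots, and the ten resulting values of $t$ are distinct. Hence there are ten values of $t$, partitioned into exactly two classes according to their fifth power, so there are exactly two conjugacy classes of order-$4$ automorphisms of $C$ and therefore exactly two degenerate Richelot isogenies from $J$ to itself.

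Finally I would assemble the pieces. The two degenerate isogenies are non-isomorphic to one another because, by Proposition~\ref{P:degenerate}, such an isogeny determines $\gamma$ up to conjugacy; the Frobenius and Verschiebung are non-isomorphic by Theorem~\ref{T:FneqV}; and each of the Frobenius and the Verschiebung is non-isomorphic to each degenerate isogeny by Section~\ref{SS:FV}. This yields four pairwise non-isomorphic Richelot isogenies from $J$ to itself. Since Theorem~\ref{T:main} asserts that there are exactly four in total, these four account for all of them, which completes the proof.

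As for difficulty: this argument is essentially bookkeeping, resting on results already in hand --- the automorphism structure from Lemma~\ref{L:models}, the non-isomorphism statements of Theorem~\ref{T:FneqV} and Section~\ref{SS:FV}, the absence of dihedral diagrams from Theorem~\ref{T:DiagramTwoSpecial}, and the count from Theorem~\ref{T:main}. I do not anticipate any genuine obstacle; the single point needing a moment's care is the verification that every automorphism with $\gamma^2 = \iota$ is captured by the polynomial $T^{10} + T^5 + 1$ (that no automorphism with $\zeta \neq 1$ intrudes), which is immediate from the order constraint noted above.
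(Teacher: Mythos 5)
Your proposal is correct and follows essentially the same route as the paper: it rules out the dihedral construction via Theorem~\ref{T:DiagramTwoSpecial}, counts the two degenerate isogenies via Proposition~\ref{P:degenerate} and part~3(c) of Lemma~\ref{L:models}, adds Frobenius and Verschiebung (distinguished by Theorem~\ref{T:FneqV}), and matches the total of four against Theorem~\ref{T:main}. You merely spell out in more detail the count of conjugacy classes of order-$4$ automorphisms and the pairwise non-isomorphism of the four classes, which the paper leaves implicit.
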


\begin{proof}
Corollary~\ref{T:DiagramTwoSpecial} says that there are no Richelot isogenies
from $C$ to itself coming from the dihedral construction. 
Proposition~\ref{P:degenerate} and part 3(c) of Lemma~\ref{L:models} show that 
there are exactly $2$ isomorphism classes of Richelot isogeny from $C$ to
itself coming from the degenerate construction. The Frobenius and the
Verschiebung are also Richelot isogenies from $C$ to itself, and 
Theorem~\ref{T:FneqV} says that they are not isomorphic to one another. This
gives us a total of four isomorphism classes of Richelot isogeny from  $C$ to
itself, and Theorem~\ref{T:main} says that there are no others.
\end{proof}

\bibliography{Richelot}
\bibliographystyle{hplaindoi}
\end{document}